\newcommand{\address}[1]{\gdef\@address{#1}}
\newcommand{\email}[1]{\gdef\@email{\url{#1}}}
\newcommand{\@endstuff}{\par\vspace{\baselineskip}\noindent\small
\begin{tabular}{@{}l}\scshape\@address\\\textit{E-mail address:} \@email\end{tabular}}
\title{Haar measure for non-Hausdorff \\ locally compact groups}
\author{Lisa Valentini}
\date{\today}
\address{DIMA, Università di Genova, Via Dodecaneso 35, 16146 Genova, Italy}
\email{s4600785@studenti.unige.it}
\date{2 November 2024}
\DeclareMathOperator\spt{spt}
\newcommand{\numberset}{\mathbbm}
\newcommand{\N}{\numberset{N}}
\newcommand{\Q}{\numberset{Q}}
\newcommand{\R}{\numberset{R}}
\newtheorem{theorem}{Theorem}[section]
\newtheorem{lemma}[theorem]{Lemma}
\newtheorem{proposition}[theorem]{Proposition}
\newenvironment{manualtheorem}[1]{%
  \manualtheoreminner
}{\endmanualtheoreminner}
\theoremstyle{definition}
\newtheorem{definition}[theorem]{Definition}
\newtheorem*{remark}{Remark}
\newtheorem*{notation}{Notation}
\newtheorem{example}[theorem]{Example}
\begin{document}

\maketitle

\begin{abstract}
    The paper describes two possible ways of extending the definition of Haar measure to non-Hausdorff locally compact groups. The first one forces compact sets to be measurable: with this construction, a counterexample to the existence of the Haar measure is provided. The second one makes use of closed compact sets instead of compact sets in the definition of Radon measure: this way, the classical theorems of existence and uniqueness of the Haar measure can be generalised to locally compact groups, not necessarily Hausdorff.
\end{abstract}

\section*{Introduction}

A Haar measure on a Hausdorff topological group is a nonzero translation-invariant Radon measure; on a locally compact Hausdorff group, the Haar measure exists and it is unique up to scalar multiplication. 
Moreover, it is of general knowledge that these two results can be somehow generalised to the non-Hausdorff case. Nevertheless, this fact is usually stated without proof and it is considered a folklore result.

Many authors in the literature, e.g. Folland \cite[Chapter 11]{bi:folland} and Hewitt and Ross \cite[Theorem 5.36]{bi:hewitt}, focus on Hausdorff topological groups, briefly justifying that this is not a loss of generality. In fact, some sources quote the following construction as a way to generalise Haar measure to those topological groups that are not Hausdorff:
given a locally compact group $G$, the quotient $G / \overline{\{e_G\}}$ is locally compact and Hausdorff, thus it admits a Haar measure $\mu$; given the quotient map $\pi \colon G \to G / \overline{\{e_G\}}$, the assignment $\overline{\mu}(E)\coloneqq \mu(\pi(E))$ for every $E\in\mathcal{B}(G)$ defines a Haar measure on $G$.
The problem in this statement concerns the definition itself of a Haar measure, which classically requires the underlying space to be Hausdorff in order to guarantee that compact sets are Borel sets.

Our first issue, then, is re-defining the Haar measure for not necessarily Hausdorff topological groups. In this paper, two possible ways are considered: they both arise as extremely natural generalisations of the common definition of Haar measure on Hausdorff topological groups, but just one of them leads to the results of existence and uniqueness (with the only assumption of the group being locally compact). They develop as follows.

The first option that is taken into account forces compact sets to be measurable by defining a Radon measure on the wider $\sigma$-algebra generated by compact sets and open sets: this attempt fails in ensuring the existence of a Haar measure, even in case the topological group is compact, and an example of this situation is provided (Example \ref{ex:gen.1}).

The second way consists in replacing the collection of compact sets with the collection of closed compact sets, which are automatically Borel sets, in the definition of Radon measure, and more specifically of local finiteness and inner regularity (Definition \ref{def:Radon}): by implementing the consequent definition of Haar measure, on a non necessarily Hausdorff locally compact group the Haar measure exists and is unique up to scalar multiplication. The present paper is mainly devoted to give a direct proof of these facts, without involving quotient groups.

The main theorems read as follows.

\begin{manualtheorem}{\ref{th:existence}}
    Let $G$ be a locally compact topological group. Then there exists a left Haar measure on $G$.
\end{manualtheorem}

\begin{manualtheorem}{\ref{th:uniqueness}}
    Let $G$ be a locally compact topological group, and let $\mu$ and $\mu'$ be left Haar measures on $G$. Then there exists a constant $a>0$ such that $\mu'=a\mu$.
\end{manualtheorem}

The proofs provided here are an adaptation of the well-known ones valid for locally compact Hausdorff groups, which are described, e.g., by Cohn \cite{bi:cohn}. The generalisation is based on four main facts: a topological group is always regular (see, e.g., \cite[Proposition 3.1.15]{bi:dikr}); in a regular space, compact sets and closed sets can be separated (Lemma \ref{th:compact_closed_disjoint}); in a locally compact regular space, every point admits as closed compact neighborhood, and every compact set has a closed compact neighborhood (Lemma \ref{th:c_o_cc}).

Moreover, with the definition of Haar measure adopted here, the construction via quotients of a Haar measure on a non-Hausdorff locally compact group actually works, as shown in Proposition \ref{th:quotient}.
The main difference is that this construction provides a way of proving existence and uniqueness of the Haar measure on locally compact groups by assuming that these results are valid on locally compact Hausdorff groups; on the contrary, the proof given in Theorems \ref{th:existence} and \ref{th:uniqueness} does not rely on this fact.

A standard example of a non-Hausdorff locally compact group is the product of groups $X \times G$, where $X$ is a Hausdorff locally compact group and $G$ is a non-trivial indiscrete group, endowed with the product topology: using the generalised definition of Haar measure, we show that the Haar measure on $X \times G$ is the composition of the Haar measure on $X$ with the projection on $X$ (Example \ref{ex:product}).

The paper is structured as follows.
In Section \ref{sec:std_def}, standard definitions and theorems concerning the Haar measure on locally compact Hausdorff groups are quickly recalled, in order to underline the aspects that need to be adapted to the non-Hausdorff case.
In Section \ref{sec:new_def}, we discuss the two above-mentioned attempts of generalising the Haar measure to non-Hausdorff topological groups: Example \ref{ex:gen.1} shows that the first attempt does not lead to the desired result; conversely, Definitions \ref{def:regular} and \ref{def:Radon} introduce the Haar measure using closed compact sets to build local finiteness and inner regularity, and they are adopted for the rest of the paper.
Section \ref{sec:regular} hosts preliminary facts given by regularity and local compactness of topological spaces.
In Section \ref{sec:Haar}, Theorems \ref{th:existence}, \ref{th:uniqueness} and \ref{th:dx_sx} prove that on a locally compact group the Haar measure exists and is unique up to multiplication with a positive constant: applications are given in Example \ref{ex:product}.
Here, both results of existence and uniqueness are checked directly, that means without assuming that they are valid for locally compact Hausdorff groups. 
In Section \ref{sec:quot}, we check that the construction via quotients of a Haar measure on a (non-Hausdorff) locally compact group works; therefore, Propositions \ref{th:quotient} and \ref{th:quotient_uniq} offer a second way of proving existence and uniqueness of the Haar measure also in the non-Hausdorff case.

\vspace{0.5cm}

\noindent \rule{\textwidth}{0.4pt}

\section{Standard definitions} \label{sec:std_def}

As definitions are under examination themselves, we start by fixing terminology.

\begin{notation}
    Let $X$ be a topological space. The Borel $\sigma$-algebra on $X$ will be denoted $\mathcal{B}(X)$. The sets of real-valued continuous functions and of real-valued compactly supported continuous functions on $X$ will be denoted $C(X)$ and $C_c(X)$ respectively. The support of a real-valued function on $X$ will be denoted $\spt(f)$. Given a subset $Y\subseteq X$, the indicator function of $Y$ will be denoted $\mathbbm{1}_Y$.

    Given $G$ a topological group, we will denote with $\tau_G$ and $e_G$ its topology and its identity respectively, unless specified otherwise.
\end{notation}

We make use of the standard definitions of regular topological space and normal topological space, as given, e.g., by Kelley \cite[p.~112]{bi:kelley}: they do not require the topological space to be $T_1$.

\begin{definition}          \label{def:loc_comp}
    A topological space $X$ is said to be \emph{locally compact} if every point of $X$ admits a compact neighborhood.
\end{definition}

Clearly, every point of a regular locally compact space admits a closed compact neighborhood (i.e. a regular locally compact space is \emph{strongly locally compact}, using the terminology by Steen and Seebach \cite{bi:steen_seebach}).

The following is standard vocabulary for Hausdorff topological groups.

\begin{definition}          \label{def:regular_H}
	Let $X$ be a Hausdorff topological space and $\mathcal{M}$ be a $\sigma$-algebra on $X$ such that $\mathcal{M}\supseteq \mathcal{B}(X)$. A positive measure $\mu\colon\mathcal{M}\to[0,+\infty]$ is said to be:
	\begin{enumerate}[label=(\roman*)]
	    \item \emph{outer regular on $E\in\mathcal{M}$} if \; $\mu(E)=\inf \{\mu(U)\mid U\supseteq E \text{ open}\}$; \label{def:out}
		\item \emph{inner regular on $E\in\mathcal{M}$} if \; $\mu(E)=\sup \{\mu(K)\mid K\subseteq E \text{ compact}\}$.
	\end{enumerate}
\end{definition}

\begin{definition}          \label{def:Radon_H}
    Let $X$ be a Hausdorff topological space. A \emph{Radon measure} on $X$ is a positive Borel measure that is finite on compact sets, outer regular on all Borel sets and inner regular on all open sets.
\end{definition}

\begin{definition}          \label{def:translation_invariant}
    Let $G$ be a topological group. A positive Borel measure $\mu$ on $G$ is said \emph{left-translation-invariant} (resp. \emph{right-translation-invariant}) if for every $A\in\mathcal{B}(G)$ and for every $g\in G$ one has $\mu(gA)=\mu(A)$ (resp. $\mu(Ag)=\mu(A)$).
\end{definition}

\begin{definition}          \label{def:Haar_H}
    Let $G$ be a Hausdorff topological group. A \emph{left} (resp. \emph{right}) \emph{Haar measure} on $G$ is a nonzero left-translation-invariant (resp. right-translation-invariant) Radon measure.
\end{definition}

In this context, the Hausdorff hypothesis ensures compact sets to be Borel sets, and then Definition \ref{def:regular_H}, \ref{def:Radon_H} and \ref{def:Haar_H} to be consistent. Together with the local compactness of the topological group, it allows to build a Radon measure that is eventually checked to be a Haar measure, and to prove its uniqueness up to a multiplicative constant.
The classical theorems concerning the Haar measure on locally compact Hausdorff groups state as follows.

\begin{theorem}             \label{th:existence_H}
	Let $G$ be a locally compact Hausdorff topological group. Then there exists a left Haar measure on $G$.
\end{theorem}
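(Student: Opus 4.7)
The plan is to adapt the classical construction via the Haar covering number, following the outline in Cohn~\cite{bi:cohn}. First, by local compactness, I would fix once and for all a compact set $K_0 \subseteq G$ with nonempty interior. For each compact $K \subseteq G$ and each nonempty open $U \subseteq G$, define the Haar covering number $(K:U)$ as the minimum number of left translates of $U$ needed to cover $K$. Compactness of $K$ guarantees $(K:U) < \infty$, and $(K_0:U) \geq 1$ since $K_0$ has nonempty interior.

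For each neighborhood $U$ of the identity $e_G$, introduce the normalized functional
$$
I_U(K) \coloneqq \frac{(K:U)}{(K_0:U)}
$$
on the compact subsets of $G$. I would check the elementary properties: monotonicity, left-invariance $I_U(gK)=I_U(K)$, subadditivity $I_U(K\cup L)\leq I_U(K)+I_U(L)$, the uniform bound $I_U(K)\leq (K:K_0)$, and normalization $I_U(K_0)=1$. The crucial additional fact, which uses the Hausdorff hypothesis via separation of disjoint compact sets by disjoint open neighborhoods, is that for any two disjoint compact sets $K,L$ there exists a neighborhood $V$ of $e_G$ with $I_U(K\cup L)=I_U(K)+I_U(L)$ for every symmetric $U\subseteq V$.

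To pass to a limit as $U$ shrinks to $\{e_G\}$, invoke Tychonoff: the $I_U$ live in the compact product space $X \coloneqq \prod_{K\text{ compact}} [0,(K:K_0)]$. For each open neighborhood $V$ of $e_G$, let $S_V$ be the closure in $X$ of $\{I_U : e_G\in U\subseteq V\}$. The family $\{S_V\}_V$ has the finite intersection property, so some $I\in\bigcap_V S_V$ exists. This $I$ inherits monotonicity, left-invariance, subadditivity, and $I(K_0)=1$, and by the separation step above it is additive on disjoint compact sets.

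Finally, I would manufacture a Borel measure from $I$ by setting $\mu^*(V)\coloneqq\sup\{I(K): K\subseteq V,\ K\text{ compact}\}$ for open $V$, extending to arbitrary sets via $\mu^*(E)\coloneqq\inf\{\mu^*(V): V\supseteq E,\ V\text{ open}\}$, and applying Carathéodory. Standard verifications show that every Borel set is $\mu^*$-measurable, that the restriction $\mu$ is Radon (local finiteness from $I(K)\leq(K:K_0)$, inner regularity on opens and outer regularity built into the construction), that $\mu$ is left-translation-invariant, and that $\mu(K_0)\geq 1$, so $\mu\neq 0$. The main obstacle is the additivity-in-the-limit step: it is the one place where the Hausdorff assumption is truly used, via separation of disjoint compact sets, and it is what upgrades the a priori only subadditive functional $I$ into a genuine premeasure; everything else is bookkeeping once $I$ is in hand.
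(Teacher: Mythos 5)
Your proposal is correct and follows exactly the classical Cohn-style construction (Haar covering numbers, normalization by $K_0$, Tychonoff limit, Carath\'eodory extension) that the paper itself takes as the reference proof: the paper states Theorem \ref{th:existence_H} without proof as a classical result and then adapts precisely this construction, with compact sets replaced by closed compact sets, in its proof of Theorem \ref{th:existence}. You also correctly identify the additivity-on-disjoint-compacts step as the one place where the separation hypothesis enters, which is the same point the paper flags when it substitutes Lemma \ref{th:compact_closed_disjoint} for Hausdorff separation in the general case.
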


\begin{theorem}             \label{th:uniqueness_H}
	Let $G$ be a locally compact Hausdorff topological group, and let $\mu$ and $\mu'$ be left Haar measures on $G$. Then there exists $a>0$ such that $\mu'=a\mu$.
\end{theorem}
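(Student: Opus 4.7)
The strategy is the classical one: show that the two positive linear functionals $I_\mu\colon f\mapsto \int f\,d\mu$ and $I_{\mu'}\colon f\mapsto \int f\,d\mu'$ on $C_c(G)$ are proportional, and then invoke the uniqueness part of the Riesz representation theorem for Radon measures on locally compact Hausdorff spaces to conclude $\mu'=a\mu$. As a preliminary step I would fix a non-negative reference function $f_0\in C_c(G)$ with $\int f_0\,d\mu>0$; such a function exists by Urysohn's lemma applied inside an open set of positive, finite $\mu$-measure, which in turn exists because $G$ is locally compact Hausdorff (hence completely regular) and $\mu\neq 0$ is Radon.

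\textbf{Key computation.} To prove $\int f\,d\mu'=a\int f\,d\mu$ for an arbitrary $f\in C_c(G)$, I would evaluate the double integral
\[
\iint f(x)\,f_0(y^{-1}x)\,d\mu(x)\,d\mu'(y)
\]
in two ways. First, the substitution $x\mapsto yx$ in the inner $\mu$-integral (left-invariance of $\mu$), followed by Fubini and then the substitution $y\mapsto yx^{-1}$ in the inner $\mu'$-integral (left-invariance of $\mu'$), produces $\int f_0\,d\mu\cdot\int f\,d\mu'$. Second, swapping the order of integration directly and substituting $y\mapsto xy$ in the inner $\mu'$-integral (left-invariance of $\mu'$) makes the resulting inner integral $\int f_0(y^{-1}x)\,d\mu'(y)$ independent of $x$, equal to $\int\check f_0\,d\mu'$ with $\check f_0(y)\coloneqq f_0(y^{-1})$, so the whole expression equals $\int\check f_0\,d\mu'\cdot\int f\,d\mu$. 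Equating yields
\[
\int f\,d\mu'\cdot\int f_0\,d\mu=\int\check f_0\,d\mu'\cdot\int f\,d\mu,
\]
hence the ratio $\int f\,d\mu'/\int f\,d\mu$ equals the constant $a\coloneqq\int\check f_0\,d\mu'/\int f_0\,d\mu$, independent of $f$. Positivity of $a$ follows because otherwise $\int f\,d\mu'=0$ for all $f\in C_c(G)$, forcing $\mu'=0$ by Riesz, a contradiction.

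\textbf{Main obstacle.} The delicate point is justifying Fubini: a Radon measure on a locally compact Hausdorff space need not be $\sigma$-finite, so the classical theorem does not apply wholesale. This is overcome by noting that the integrand is supported on the product $\spt f\times(\spt f\cdot\spt f_0^{-1})$, a product of two compact sets on which both $\mu$ and $\mu'$ are finite by local finiteness, so Fubini applies on the restrictions. A minor point is joint Borel-measurability of $(x,y)\mapsto f_0(y^{-1}x)$, which follows from joint continuity of the group operations composed with the continuous function $f_0$. Beyond these technicalities, the proof is an essentially formal manipulation of integrals using left-invariance of the two measures.
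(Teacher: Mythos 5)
There is a genuine gap in the key computation. In your first evaluation of the double integral you arrive at $\int_X f_0(x)\bigl[\int_G f(yx)\,d\mu'(y)\bigr]d\mu(x)$ and then perform ``the substitution $y\mapsto yx^{-1}$ in the inner $\mu'$-integral (left-invariance of $\mu'$)'' to conclude that the inner integral equals $\int f\,d\mu'$. But $y\mapsto yx^{-1}$ is a \emph{right} translation, and a left Haar measure is right-invariant only when $G$ is unimodular. On a non-unimodular group (e.g.\ the $ax+b$ group) one has $\int_G f(yx)\,d\mu'(y)=\Delta'(x)^{-1}\int_G f\,d\mu'$, where $\Delta'$ is the modular function, so your first evaluation actually yields $\bigl(\int f_0\,\Delta'^{-1}\,d\mu\bigr)\bigl(\int f\,d\mu'\bigr)$ rather than $\bigl(\int f_0\,d\mu\bigr)\bigl(\int f\,d\mu'\bigr)$. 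One cannot simply repair this by inserting $\Delta'$, because asserting that the right translate $A\mapsto\mu'(Ax)$ is a \emph{scalar multiple} of $\mu'$ is itself an application of the uniqueness theorem you are trying to prove; the argument becomes circular. Your second evaluation (the substitution $y\mapsto xy$) is a genuine left translation and is correct, as are the remarks on supports, Fubini on products of compacta, and the final appeal to Riesz uniqueness. So the proof as written establishes the theorem only for unimodular groups.

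The standard remedy, and the one the paper adopts (following Cohn) in its proof of the generalized Theorem \ref{th:uniqueness} --- the paper quotes Theorem \ref{th:uniqueness_H} itself as classical without proof --- is Weil's normalization: instead of $f(x)f_0(y^{-1}x)$ one integrates $h(x,y)=f(x)\,g(yx)\big/\!\int_G g(tx)\,d\mu'(t)$. The denominator transforms under right translation in $x$ exactly as the numerator does, so the modular factor cancels, and the chain of substitutions $(x,y)\mapsto(y^{-1},xy)$ produces a factorization in which the $f$-dependent factor is the same for $\mu$ and $\mu'$. If you want to keep your unnormalized kernel, you must either restrict to unimodular groups or first develop the modular function independently of uniqueness, neither of which your write-up does.
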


It is immediate to see that Theorem \ref{th:existence_H} is no longer true for a Hausdorff topological group that is not locally compact. For instance, any countably infinite Hausdorff topological group having infinite compact sets (e.g. the group of rational numbers $\Q$ with the topology inherited by the Euclidean topology on $\R$) does not have a Haar measure; see Example \ref{ex:gen.1}(1).

\begin{remark}
    Since we are interested in applying Definition \ref{def:Radon_H} only to locally compact groups, we will use the terminology \emph{local finiteness} to refer to finiteness on compact sets. 
\end{remark}

\section{Generalised definitions}   \label{sec:new_def}

Our aim is generalising the Haar measure to topological groups that are not necessarily Hausdorff.
First, in order to preserve the notions of local finiteness and inner regularity of a measure $\mu$ on a (not necessarily Hausdorff) topological space $X$, the domain of $\mu$ must contain a collection of sets which is closed under finite unions.
As seen in Definition \ref{def:regular_H}, in the case of a Hausdorff topological space this role is played by the collection of compact sets.

The first idea that can be examined is rephrasing that definition by forcing compact sets to be measurable: that is, by considering $\mathcal{BK}(X)$ the $\sigma$-algebra generated by open subsets and compact subsets of $X$, and by defining inner regularity and outer regularity as given in Definition \ref{def:regular_H} for a measure $\mu \colon \mathcal{M} \to [0, + \infty]$ where $\mathcal{M} \supseteq \mathcal{BK}(X)$.
Then, new definitions of Radon measure and Haar measure, whose domain is now $\mathcal{BK}(X)$, follow straightforwardly.

Nevertheless, the following example shows that in these terms the existence of a Haar measure on a topological group $G$ is not guaranteed, not even if $G$ is compact.

\begin{example}                 \label{ex:gen.1}
    Let $(G,\tau_G)$ be a non-trivial topological group.
    \begin{enumerate}[label=(\arabic*)]
        \item If $G$ is countably infinite and $(G,\tau_G)$ has infinite compact sets, then it admits no (left or right) Haar measure in the generalised terms above.
        \item If $\tau_G$ is the indiscrete topology, then $(G,\tau_G)$ is compact and admits no (left or right) Haar measure in the generalise terms above.
    \end{enumerate}
\end{example}
\begin{proof}
    Suppose $\mu \colon \mathcal{BK}(G) \to [0,+\infty]$ is a Haar measure. Let $a\ge 0$ be the value of the finite measure of singletons, which does not depend on the singleton due to translation invariance. 
    
    (1)  If $a>0$, then $\mu(K)=+\infty$ for every infinite compact set $K\subseteq G$. Then $a=0$, which yields $\mu(G)=0$ because $G$ is countable.

    (2) For the outer regularity of the measure, one gets $a=\mu(G)$ because the topology is indiscrete. Moreover, $\mu(G)\ge 2a$ since $G$ has at least two elements. Then $a=0$ and $\mu(G)=0$.
\end{proof}

\medskip

\begin{remark}
    The proof of Example \ref{ex:gen.1}(1) shows that, more in general, a countably infinite topological group having infinite compact sets admits no translation-invariant measure that is finite on compact sets. As suggested by the referee, the same statement holds for an indiscrete infinite abelian group $G$ too (which is Example \ref{ex:gen.1}(2) when $G$ is also infinite and abelian). This can be proved using the fact that infinite compact abelian groups have a subgroup of infinite countable index \cite[\S16.13(b),(c)]{bi:hewitt}. Indeed, if $H$ is such a subgroup of $G$, then $G= \sqcup_{n\in\N} x_n+H$ is the disjoint union of the cosets $x_n+H$ for some sequence $(x_n)_{n\in\N}$ of points of $G$. All those cosets are compact since the topology is indiscrete, and therefore they have finite measure $b\ge0$ which does not depend on $n\in\N$ due to the translation invariance. But $\mu(G)<+\infty$ because $G$ is compact, which yields $b=0$ and $\mu(G)=0$. In fact, the same argument works for all indiscrete infinite groups admitting a subgroup of infinite countable index: more details can be found in \cite{bi:extra_index}.
\end{remark}

Given Example \ref{ex:gen.1}, we are induced to abandon the previous attempt of generalising Haar measure.

The second possible construction of Haar measure for non-Hausdorff groups considered in this paper makes use of the collection of closed compact sets, which is then automatically included in the Borel $\sigma$-algebra. With this generalisation, we get an extension of the theorems concerning existence and uniqueness of the Haar measure also to non-Hausdorff locally compact groups. The following two sections are devoted to details concerning this generalisation.

\begin{definition}          \label{def:regular}
    Let $X$ be a topological space and $\mathcal{M}$ be a $\sigma$-algebra on $X$ such that $\mathcal{M}\supseteq \mathcal{B}(X)$. A positive measure $\mu\colon\mathcal{M}\to[0,+\infty]$ is said to be:
	\begin{enumerate}[label=(\roman*)]
            \item \emph{outer regular on $E\in\mathcal{M}$} as in Definition \ref{def:regular_H}\ref{def:out};
		\item \emph{inner regular on $E\in\mathcal{M}$} if \, $\mu(E)=\sup \{\mu(K)\mid K\subseteq E \text{ closed compact}\}$.
	\end{enumerate}
\end{definition}

\begin{definition}          \label{def:Radon}
    Let $X$ be a topological space. A \emph{Radon measure} on $X$ is a positive Borel measure that is finite on closed compact sets, outer regular on all Borel sets and inner regular on all open sets.
\end{definition}

Definition \ref{def:Radon} is coherent with Definition \ref{def:Radon_H}. Then, a generalized definition of Haar measure -- which is formally identical to Definition \ref{def:Haar_H} but is also valid in the non-Hausdorff case -- follows straightforwardly.

\begin{remark}
    Being translation-invariant, a left (resp. right) Haar measure $\mu$ on a topological group $G$ keeps the property of inducing left- (resp. right-) translation-invariant integrals, that is $\int_G f(gx) d\mu(x) = \int_G f(x) d\mu(x)$ (resp. $\int_G f(xg) d\mu(x) = \int_G f(x) d\mu(x)$) for every $g\in G$ and $f\in \mathcal{L}^1(\mu)$.
\end{remark}

From this point onward, we will refer to this extended definitions of Radon measure and Haar measure.

\section{Locally compact regular spaces}    \label{sec:regular}

This section collects the preliminary results that will be used in Section \ref{sec:Haar}. Most of them are adaptions of standard theorems.

Since we are interested in topological groups, which are regular (see, e.g., \cite[Proposition 3.1.15]{bi:dikr}), we will focus on regular spaces.
We begin with the already known possibility of separating a compact set and a closed set in a regular space: see, e.g., \cite[\S5 Theorem 10]{bi:kelley}.

\begin{lemma}           \label{th:compact_closed_disjoint}
    Let $X$ be a regular topological space, and let $A,B\subseteq X$ be disjoint subsets, with $A$ compact and $B$ closed. Then there exist $U$ and $V$ disjoint open sets such that $A\subseteq U$ and $B\subseteq V$. In particular, if $X$ is regular and compact, than it is normal.
\end{lemma}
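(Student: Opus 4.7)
The plan is to use the standard two-step argument: first reduce to separating a single point from the closed set $B$ via regularity, then use compactness of $A$ to finitely combine these separations. Since the paper's notion of regular (following Kelley) does not require $T_1$, all we have at our disposal is that for any point $x$ and any closed set $F$ with $x\notin F$, there exist disjoint open sets $U\ni x$ and $V\supseteq F$.

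First I would fix an arbitrary $a\in A$. Because $A$ and $B$ are disjoint, $a\notin B$; and $B$ is closed, so regularity supplies disjoint open sets $U_a\ni a$ and $V_a\supseteq B$. Doing this for every $a$ produces an open cover $\{U_a\mid a\in A\}$ of $A$.

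Next I would invoke compactness of $A$ to extract a finite subcover $U_{a_1},\dots,U_{a_n}$ and set $U\coloneqq U_{a_1}\cup\cdots\cup U_{a_n}$ and $V\coloneqq V_{a_1}\cap\cdots\cap V_{a_n}$. Then $U$ is open as a union of open sets, $V$ is open as a finite intersection of open sets, $A\subseteq U$ by construction, and $B\subseteq V$ since $B\subseteq V_{a_i}$ for each $i$. Disjointness is the only thing left to verify: if $x\in U$, then $x\in U_{a_i}$ for some $i$, and since $U_{a_i}\cap V_{a_i}=\emptyset$ we get $x\notin V_{a_i}\supseteq V$, so $U\cap V=\emptyset$.

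For the final \emph{in particular} clause, I would note that in a compact space every closed subset is compact; thus given two disjoint closed sets $A,B$ in a regular compact space, $A$ is automatically compact and the first part applies to yield the required separation, establishing normality. No step here is a genuine obstacle; the only subtle point is to remember that the $T_1$ axiom is not assumed, so one must not accidentally use that points are closed or that ``regular'' entails Hausdorffness — the argument above avoids this pitfall because it only invokes the point-from-closed-set separation property that defines regularity in the Kelley sense.
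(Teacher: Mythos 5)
Your argument is correct and is exactly the standard proof that the paper delegates to Kelley (ch.~5, Theorem 10): separate each point of $A$ from $B$ by regularity, pass to a finite subcover, and take the union of the $U$'s against the intersection of the corresponding $V$'s. The \emph{in particular} clause is also handled as intended, and you correctly avoid any use of $T_1$.
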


The following technical lemma is a slight variant of a result that can be found, e.g., in \cite[Lemma 7.1.10]{bi:cohn}.

\begin{lemma}   	   \label{th:compact_in_opens}
	Let $X$ be a regular topological space. Consider a closed compact set $K$ and two open sets $U_1$ and $U_2$ of $X$ such that $K \subseteq U_1 \cup U_2$. Then there exist $K_1$ and $K_2$ closed compact sets of $X$ such that $K_1 \subseteq U_1$, $K_2\subseteq U_2$ and $K=K_1 \cup K_2$.
\end{lemma}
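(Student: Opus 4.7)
The plan is to mimic the standard Urysohn-style shrinking argument, but using Lemma~\ref{th:compact_closed_disjoint} in place of normality (since we are not assuming $X$ is Hausdorff or normal, only regular).

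First I would set $A := K \setminus U_2$ and $B := X \setminus U_1$. Since $K$ is closed in $X$ and $U_2$ is open, $A$ is closed in $X$; moreover $A \subseteq K$ is compact as a closed subset of the compact set $K$. The set $B$ is closed by construction. The disjointness $A \cap B = \emptyset$ is immediate from $K \subseteq U_1 \cup U_2$, because a point of $A$ lies in $K$ but not in $U_2$, hence must lie in $U_1$, i.e.\ not in $B$.

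Next I would invoke Lemma~\ref{th:compact_closed_disjoint} on the disjoint pair $(A, B)$, with $A$ compact and $B$ closed in the regular space $X$: this yields disjoint open sets $V \supseteq A$ and $W \supseteq B$. Then I set
\[
K_1 := K \setminus W, \qquad K_2 := K \setminus V.
\]
Each is closed in $X$ (intersection of the closed set $K$ with the closed set $X\setminus W$, resp.\ $X\setminus V$), hence compact as a closed subset of the compact set $K$. The inclusion $K_1 \subseteq U_1$ follows from $X \setminus U_1 = B \subseteq W$, which gives $X \setminus W \subseteq U_1$; symmetrically $K_2 \subseteq U_2$ follows from $A = K \setminus U_2 \subseteq V$, so $K \setminus V \subseteq U_2$. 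Finally, using $V \cap W = \emptyset$,
\[
K_1 \cup K_2 = K \setminus (W \cap V) = K,
\]
which completes the proof.

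The only subtle point, and the reason the lemma is stated for closed compact $K$ rather than merely compact $K$, is in checking that $K_1$ and $K_2$ are closed in $X$ (not just in $K$): this needs $K$ itself to be closed, and without the Hausdorff hypothesis we cannot dispense with that assumption. Apart from that, the entire argument is a direct consequence of the compact/closed separation property provided by Lemma~\ref{th:compact_closed_disjoint}, so no further obstacle arises.
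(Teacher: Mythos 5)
Your proof is correct and follows essentially the same route as the paper's: produce two disjoint sets from the data, separate them with Lemma~\ref{th:compact_closed_disjoint}, and take $K_1$, $K_2$ to be $K$ minus the resulting open sets, using the closedness of $K$ exactly where you say it is needed. The only (immaterial) difference is that the paper separates the two compact sets $K\setminus U_1$ and $K\setminus U_2$ from each other, whereas you separate the compact set $K\setminus U_2$ from the closed set $X\setminus U_1$; both choices make the lemma apply and lead to the same conclusion.
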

\begin{proof}
	Let $L_i\coloneqq K\setminus U_i$ for $i=1,2$: they are closed, compact and disjoint.
    By Lemma \ref{th:compact_closed_disjoint} there exist $V_1$, $V_2$ disjoint open sets such that $L_i \subseteq V_i$ for $i=1,2$. Let $K_i\coloneqq K\setminus V_i$ for $i=1,2$, which are closed and compact. It is immediate to check that $K_i\subseteq U_i$ for $i=1,2$, and $K_1\cup K_2= K$.
\end{proof}

\medskip

We now deal with some properties linked to local compactness.
We start with a fact useful to replace compact sets with closed compact sets when dropping the Hausdorff hypothesis.

\begin{lemma}           \label{th:c_o_cc}
    Let $X$ be a regular locally compact topological space and let $K$ be a compact subset of $X$. Then there exist an open set $U$ and a closed compact set $L$ such that $K\subseteq U \subseteq L$.
\end{lemma}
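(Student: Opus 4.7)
The plan is to exploit strong local compactness pointwise on $K$ and then compactify the construction by taking a finite subcover. First I would, for each $x\in K$, invoke the hypothesis to produce a closed compact neighborhood $L_x$ of $x$; by definition of neighborhood there is an open set $U_x$ with $x\in U_x\subseteq L_x$.

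Next I would use that $K$ is compact to extract a finite subcover: since $K\subseteq\bigcup_{x\in K}U_x$, there exist $x_1,\dots,x_n\in K$ with $K\subseteq U_{x_1}\cup\cdots\cup U_{x_n}$. Setting
\[
U\coloneqq\bigcup_{i=1}^n U_{x_i},\qquad L\coloneqq\bigcup_{i=1}^n L_{x_i},
\]
the set $U$ is open as a union of open sets, and $L$ is closed and compact because finite unions of closed sets are closed and finite unions of compact sets are compact. The chain $K\subseteq U\subseteq L$ then follows immediately from $U_{x_i}\subseteq L_{x_i}$ for each $i$.

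There is no real obstacle here; the argument is a direct compactness argument and does not even require the regularity hypothesis used elsewhere in the section. The only subtlety worth flagging is the distinction between ``locally compact'' and ``strongly locally compact'': without the closedness of the $L_x$ the union $L$ would still be compact but might fail to be closed in a non-Hausdorff setting, so the assumption is used precisely at this step.
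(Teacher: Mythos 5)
Your argument is correct and is essentially identical to the paper's proof: both extract, for each point of $K$, an open set contained in a closed compact neighborhood, pass to a finite subcover, and take finite unions. Your closing remark about where closedness of the $L_x$ is needed is accurate and consistent with the paper's intent.
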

\begin{proof}
    By regularity and local compactness, for every $x\in K$ there exist a closed compact set $L_x$ and an open set $U_x$ such that $x\in U_x\subseteq L_x$. Then there exist $x_1,\dots,x_n\in K$ such that $K\subseteq \bigcup_{i=1}^n U_{x_i} \subseteq \bigcup_{i=1}^n L_{x_i}$, where $U \coloneqq \bigcup_{i=1}^n U_{x_i}$ is open and $L\coloneqq \bigcup_{i=1}^n L_{x_i}$ is closed and compact.
\end{proof}

\medskip

We also need to replace the well known statement of the Urysohn lemma for locally compact Hausdorff spaces with an analogous version concerning regular locally compact spaces: a proof given by Kelley in  \cite[\S5 Theorem 18]{bi:kelley} provides Lemma \ref{th:K<U} and, by just a few more specifications, the desired theorem too.

\begin{lemma}               \label{th:K<U}
    Let $X$ be a regular locally compact topological space, and consider an open set $U$ and a closed compact set $K$ in $X$ such that $K\subseteq U$. Then there exists an open set $V$ with compact closure such that $K \subseteq V \subseteq \overline{V} \subseteq U$.
\end{lemma}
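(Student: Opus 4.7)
The plan is to combine the two pillars already in place: strong local compactness (from Proposition \ref{th:loc_comp_group}) gives us closed compact neighborhoods of every point, and regularity (through the local-base formulation in Proposition \ref{th:loc_comp_group}(iv)) lets us fit such neighborhoods inside any prescribed open set. I would then cover $K$ by finitely many such well-placed neighborhoods and take the union.

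First I would exploit Proposition \ref{th:loc_comp_group}(iv): since $X$ is regular and locally compact, every point has a local base of closed compact neighborhoods. For each $x\in K$, the set $U$ is an open neighborhood of $x$, so there exists a closed compact neighborhood $N_x$ of $x$ with $N_x\subseteq U$. Pick an open $V_x$ with $x\in V_x\subseteq N_x$; then $\overline{V_x}\subseteq \overline{N_x}=N_x\subseteq U$, and $\overline{V_x}$ is compact as a closed subset of the compact set $N_x$.

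Next, $\{V_x\}_{x\in K}$ is an open cover of the compact set $K$, so by compactness there exist $x_1,\dots,x_n\in K$ with $K\subseteq V:=\bigcup_{i=1}^n V_{x_i}$. Since closure commutes with finite unions, $\overline{V}=\bigcup_{i=1}^n \overline{V_{x_i}}$, which is a finite union of closed compact sets contained in $U$; hence $\overline{V}$ is itself closed, compact, and contained in $U$. This yields the chain $K\subseteq V\subseteq \overline{V}\subseteq U$ with $\overline{V}$ compact, as required.

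The only delicate step is the first one: I must ensure that I obtain a neighborhood whose \emph{closure} (not just the neighborhood itself) lies inside $U$. This is exactly where regularity is needed, and invoking the local-base-of-closed-compact-neighborhoods characterization in Proposition \ref{th:loc_comp_group}(iv) packages both the regularity and the strong local compactness into a single convenient fact, so no further separation argument is required. Alternatively one could apply Lemma \ref{th:c_o_cc} to get a first closed compact enlargement of $K$ and then use Lemma \ref{th:compact_closed_disjoint} to separate $K$ from the closed set $X\setminus U$ inside that enlargement, but the local-base approach is cleaner.
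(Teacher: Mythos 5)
Your proof is correct: invoking Proposition \ref{th:loc_comp_group}(iv) to get, for each $x\in K$, a closed compact neighborhood inside $U$, and then passing to a finite subcover whose closure is a finite union of compact sets, is exactly the standard argument that the paper delegates to Kelley's Theorem 18. Every step checks out, including the key point that $\overline{V_x}\subseteq N_x\subseteq U$ because $N_x$ is closed, so nothing needs to be added.
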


\begin{theorem}             \label{th:urysohn}
    Let $X$ be a regular locally compact topological space, and consider an open set $U$ and a closed compact set $K$ in $X$ such that $K\subseteq U$. Then there exists $g\in C_c(X)$ such that $\mathbbm{1}_K\leq g \leq \mathbbm{1}_U$ e $\spt(g)\subseteq U$.
\end{theorem}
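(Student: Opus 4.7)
The plan is to reduce to the classical Urysohn lemma on a normal space by working inside a suitable compact neighbourhood of $K$. First I would apply Lemma \ref{th:K<U} to obtain an open set $V$ with $K \subseteq V \subseteq \overline{V} \subseteq U$ and $\overline{V}$ compact. The idea is then to build the function $g$ on $\overline{V}$ first, and extend it by zero to all of $X$.

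Next I would equip $\overline{V}$ with the subspace topology. Regularity is hereditary (it does not depend on $T_1$), so $\overline{V}$ is a compact regular space, and Lemma \ref{th:compact_closed_disjoint} gives that it is normal. Inside $\overline{V}$, the sets $K$ and $\overline{V}\setminus V$ are closed and disjoint (the first because $K$ is closed in $X$, the second by definition). The classical Urysohn lemma for normal spaces (which does not require $T_1$) then yields a continuous function $f\colon \overline{V}\to [0,1]$ with $f\equiv 1$ on $K$ and $f\equiv 0$ on $\overline{V}\setminus V$.

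I would then define $g\colon X\to [0,1]$ by $g\coloneqq f$ on $\overline{V}$ and $g\coloneqq 0$ on $X\setminus V$. Both $\overline{V}$ and $X\setminus V$ are closed, their union is all of $X$, and the two definitions agree on the overlap $\overline{V}\setminus V$ (since $f$ vanishes there); hence the pasting lemma gives $g\in C(X)$. Finally, I would verify: $\mathbbm{1}_K\leq g$ because $g\equiv 1$ on $K$; $g\leq \mathbbm{1}_U$ because outside $U$ we have $X\setminus U\subseteq X\setminus \overline{V}\subseteq X\setminus V$, where $g$ vanishes; and $\spt(g)\subseteq \overline{V}\subseteq U$ is compact, so $g\in C_c(X)$.

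The only real point to be careful about is the non-Hausdorff setting: one must confirm that subspaces of regular spaces remain regular (immediate from the local-neighbourhood form of regularity) and that Urysohn's lemma is invoked in the $T_1$-free form, where normality alone suffices. Both are standard and the rest of the argument is essentially the textbook proof.
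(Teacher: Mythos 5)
Your proof is correct. It follows the paper's first step exactly --- both apply Lemma \ref{th:K<U} to obtain an open $V$ with $K \subseteq V \subseteq \overline{V} \subseteq U$ and $\overline{V}$ compact --- but the second step is executed differently: the paper simply cites Kelley's separation theorem for regular locally compact spaces to produce a continuous function on all of $X$ that is $0$ on $K$ and $1$ off $V$, and takes $g = 1-f$; you instead build the function by hand, using the ``regular and compact implies normal'' clause of Lemma \ref{th:compact_closed_disjoint} to see that $\overline{V}$ is normal, invoking the $T_1$-free Urysohn lemma there to separate $K$ from $\overline{V}\setminus V$, and extending by zero via the pasting lemma (the two closed pieces $\overline{V}$ and $X\setminus V$ cover $X$ and the definitions agree on $\overline{V}\setminus V$). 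The underlying mechanism is the same --- Kelley's cited theorem is proved along essentially these lines --- so what your version buys is self-containedness: it isolates exactly which non-Hausdorff facts are needed (heredity of regularity, normality of compact regular spaces, Urysohn's lemma without $T_1$) instead of delegating them to the reference. Your final verifications ($g\equiv 1$ on $K$, $g\equiv 0$ off $V$, hence $\spt(g)\subseteq\overline{V}\subseteq U$ with $\spt(g)$ compact as a closed subset of the compact set $\overline{V}$) are all correct.
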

\begin{proof}
    Let $V$ be an open neighborhood of $K$ as in Lemma \ref{th:K<U}. See \cite[\S5 Theorem 18]{bi:kelley}: it provides a continuous function $f \colon X \to [0,1]$ which is zero on $K$ and one on $X\setminus \overline{V}$. The function we are looking for is $g\coloneqq 1-f$, which is also compactly supported in $U$, since $\spt(g) \subseteq \overline{V} \subseteq U$, where $\overline{V}$ is compact and closed. 
\end{proof}

\medskip

The previous result allows to adapt to regular locally compact spaces the well known Riesz representation theorem for positive linear functionals, specifically the part concerning the uniqueness of the representing measure.

\begin{theorem}             \label{th:riesz}
    Let $X$ be a regular locally compact topological space, and let $\mathcal{M}$ be a $\sigma$-algebra on $X$ such that $\mathcal{M}\supseteq\mathcal{B}(X)$. Let $\Lambda\colon C_c(X)\to\R$ be a positive linear functional and $\mu_1,\mu_2\colon\mathcal{M}\to[0,+\infty]$ be Radon measures such that for every $f\in C_c(X)$ the equality $\int_X f \, d\mu_i=\Lambda(f)$ holds for $i=1,2$. Then $\mu_1=\mu_2$.
\end{theorem}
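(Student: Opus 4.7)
The plan is to prove $\mu_1 = \mu_2$ by first establishing agreement on open sets (via the Urysohn lemma together with inner regularity on open sets), and then extending to all of $\mathcal{M}$ via outer regularity.

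First I would fix an open set $U \subseteq X$ and a closed compact set $K \subseteq U$. By Theorem \ref{th:urysohn} (Urysohn for regular locally compact spaces), there exists $g \in C_c(X)$ with $\mathbbm{1}_K \leq g \leq \mathbbm{1}_U$ and $\spt(g) \subseteq U$. Since $\spt(g)$ is closed and compact, the local finiteness of each $\mu_i$ makes $\int_X g\,d\mu_i$ finite, and the chain of inequalities
\[
\mu_1(K) \;\leq\; \int_X g\, d\mu_1 \;=\; \Lambda(g) \;=\; \int_X g\,d\mu_2 \;\leq\; \mu_2(U)
\]
follows from $\mathbbm{1}_K \leq g \leq \mathbbm{1}_U$. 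Taking the supremum over all closed compact $K \subseteq U$ and invoking inner regularity of $\mu_1$ on open sets (available since $\mu_1$ is a Radon measure in the sense of Definition \ref{def:Radon}) yields $\mu_1(U) \leq \mu_2(U)$. Swapping the roles of $\mu_1$ and $\mu_2$ gives equality on every open set.

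Next, for an arbitrary $E \in \mathcal{M}$, outer regularity of both measures gives
\[
\mu_i(E) \;=\; \inf \{\mu_i(U) \mid U \supseteq E, \, U \text{ open}\}, \qquad i=1,2,
\]
and since we have already shown $\mu_1(U) = \mu_2(U)$ for every open $U$, the two infima coincide and $\mu_1(E)=\mu_2(E)$.

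The only delicate point is matching the hypotheses of the two regularity properties to what Theorem \ref{th:urysohn} actually delivers: Urysohn is stated for closed compact $K$ inside open $U$, and this is precisely the form in which inner regularity on open sets is phrased in Definition \ref{def:regular}, so no further refinement is needed. I do not expect a genuine obstacle here; the argument is the standard uniqueness half of the Riesz representation theorem, and the generalised Definitions \ref{def:regular} and \ref{def:Radon} together with the regular Urysohn lemma make the classical proof go through verbatim.
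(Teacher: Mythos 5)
Your proof is correct and follows essentially the same route as the paper's: the same Urysohn function, the same chain $\mu_1(K)\leq\Lambda(g)\leq\mu_2(U)$, and the same appeal to inner and outer regularity. The only (immaterial) difference is the order of the limiting steps — you take the supremum over closed compact $K\subseteq U$ first to get agreement on open sets, whereas the paper takes the infimum over open $U\supseteq K$ first to get agreement on closed compact sets and then passes to open sets by inner regularity.
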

\begin{proof}
    By inner and outer regularity, it is enough to prove that $\mu_1$ and $\mu_2$ coincide on closed compact sets. Given a closed compact set $K$ and an open set $U$ such that $K\subseteq U$, by Theorem \ref{th:urysohn} there exists $f\in C_c(X)$ such that $\mathbbm{1}_K\leq f \leq \mathbbm{1}_U$ and $\spt(f)\subseteq U$.
    Then $\mu_1(K) \leq \Lambda(f)  \leq \mu_2(U)$.    
    Taking the lower bound in the right side of the inequality upon the open sets containing $K$, one has $\mu_1(K)\leq\mu_2(K)$.
    Equality follows by switching $\mu_1$ and $\mu_2$. 
\end{proof}

\medskip

Lastly, we will need to compute integrals on a product space with respect to Radon measures.
The next lemma can be found, e.g., in \cite[Lemma 7.6.3]{bi:cohn}, while the following proposition, that makes use of the lemma, is proved as a slight variant of \cite[Proposition 7.6.4]{bi:cohn}:  in order to avoid redundancy, some technical passages make explicit reference to the original proof.  

\begin{lemma}               \label{th:lemma_int}
	Let $X$ and $Y$ be topological spaces, with $Y$ being compact, and let $f:X\times Y \to \R$ be continuous. Then for every $x_0\in X$ and for every $\epsilon>0$ there exists an open neighborhood $U_0$ of $x_0$ such that for every $x\in U_0$ and for every $y\in Y$ one has $|f(x,y)-f(x_0,y)|<\epsilon$.
	\label{cont_compatto}
\end{lemma}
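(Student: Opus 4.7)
The plan is to run the standard ``tube lemma''-style argument: fix $x_0$ and $\epsilon>0$, then control $|f(x,y)-f(x_0,y)|$ locally in $y$ via joint continuity at each point $(x_0,y)$, and finally exploit compactness of $Y$ to pass to a single neighborhood $U_0$ of $x_0$ that works uniformly.

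Concretely, I would first use continuity of $f$ at the point $(x_0,y)$ for each $y\in Y$ to produce open sets $U_y\ni x_0$ in $X$ and $V_y\ni y$ in $Y$ such that $|f(x,y')-f(x_0,y)|<\epsilon/2$ for all $(x,y')\in U_y\times V_y$; this is just unpacking the definition of continuity with respect to the product topology, choosing the basic open neighborhood of $(x_0,y)$ of the form $U_y\times V_y$. Next, I would observe that $\{V_y\}_{y\in Y}$ is an open cover of the compact space $Y$, so there exist $y_1,\dots,y_n\in Y$ with $Y=\bigcup_{i=1}^n V_{y_i}$.

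Then I would set $U_0\coloneqq\bigcap_{i=1}^n U_{y_i}$, which is a finite intersection of open neighborhoods of $x_0$ and hence itself an open neighborhood of $x_0$. For arbitrary $x\in U_0$ and $y\in Y$, pick an index $i$ with $y\in V_{y_i}$; since $x\in U_{y_i}$ and $x_0\in U_{y_i}$, applying the defining inequality at the pair $(x,y)$ and at the pair $(x_0,y)$ and combining via the triangle inequality gives
\[
|f(x,y)-f(x_0,y)|\le |f(x,y)-f(x_0,y_i)|+|f(x_0,y_i)-f(x_0,y)|<\tfrac{\epsilon}{2}+\tfrac{\epsilon}{2}=\epsilon,
\]
which is the desired estimate.

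The only nontrivial step is recognizing that compactness of $Y$ is exactly what lets us turn a pointwise-in-$y$ choice of neighborhoods $U_y$ of $x_0$ into one universal neighborhood $U_0$; this is the standard ``tube lemma'' move and is not really an obstacle so much as the heart of the argument. No hypothesis on $X$ (Hausdorff, regular, locally compact, etc.) is needed, so the result applies in the generality required later in the paper.
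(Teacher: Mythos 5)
Your proof is correct and is exactly the standard tube-lemma argument that the paper's cited reference (Cohn, Lemma 7.6.3) uses; the paper itself gives no proof and simply defers to that source. No issues.
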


\begin{proposition}             \label{th:product_measure}
    Let $X$ and $Y$ be regular locally compact topological spaces, and let $\mu$ and $\lambda$ be Radon measures on $X$ and $Y$ respectively. Then for every function $f\in C_c(X\times Y)$ one has the following:
	\begin{enumerate}[label=(\alph*)]
		\item $\big[y \mapsto f(x,y)\big] \in C_c(Y)$ for every $x\in X$, and $\big[x \mapsto f(x,y)\big] \in C_c(X)$ for every $y\in Y$;
		\item $\Bigl[ x \mapsto \displaystyle\int_Y f(x,y) \, d\lambda(y) \Bigr] \in C_c(X)$, and $\Bigl[ y \mapsto \displaystyle\int_X f(x,y) \, d\mu(x) \Bigr] \in C_c(Y)$;
		\item $\displaystyle\int_X \int_Y f(x,y) \, d\lambda(y) d\mu(x) = \int_Y \int_X f(x,y) \, d\mu(x) d\lambda(y)$.
	\end{enumerate}
\end{proposition}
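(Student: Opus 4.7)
The plan is to prove (a) first, since it makes the integrals appearing in (b) well-defined; then (b), which in turn makes the iterated integrals in (c) meaningful; and finally (c). Throughout I set $K \coloneqq \spt(f)$ (compact in $X \times Y$), $K_X \coloneqq \pi_X(K)$, $K_Y \coloneqq \pi_Y(K)$, which are compact in $X$ and $Y$ as continuous images of a compact set.

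For (a), fix $x_0 \in X$. Continuity of $y \mapsto f(x_0, y)$ is immediate from its factorization as $y \mapsto (x_0, y) \mapsto f(x_0, y)$. For compact support, the set $A \coloneqq \{y \in Y : (x_0, y) \in K\}$ is closed in $Y$ as the preimage of the closed set $K$ under a continuous map, and it is contained in the compact set $K_Y$; since a closed subset of a compact set is compact in any topological space (no Hausdorff needed), $A$ is compact, and the support of the slice, being closed in $Y$ and contained in $A$, is compact. The argument for the second slice is symmetric. For (b), consider $g(x) \coloneqq \int_Y f(x,y)\,d\lambda(y)$. Compact support follows from the same projection argument: $g$ vanishes outside $K_X$, so $\spt(g) \subseteq K_X$ is compact. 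For continuity at $x_0 \in X$, the subtlety is that Lemma \ref{th:lemma_int} requires the second factor to be compact. I apply Lemma \ref{th:c_o_cc} to $K_Y \subseteq Y$, obtaining a closed compact $L_Y \subseteq Y$ with $K_Y \subseteq L_Y$; since $f(x, \cdot)$ vanishes off $K_Y \subseteq L_Y$, I may rewrite $g(x) = \int_{L_Y} f(x,y)\,d\lambda(y)$. Then $f|_{X \times L_Y}$ is continuous on a product with compact second factor, so Lemma \ref{th:lemma_int} gives, for every $\epsilon > 0$, an open neighborhood $U_0$ of $x_0$ on which $|f(x,y) - f(x_0,y)| < \epsilon$ for all $y \in L_Y$; combined with $\lambda(L_Y) < +\infty$ (local finiteness of $\lambda$ on closed compact sets), this yields $|g(x) - g(x_0)| \leq \epsilon\,\lambda(L_Y)$ for $x \in U_0$, proving continuity.

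For (c), parts (a) and (b) turn the two iterated integrals into well-defined positive linear functionals $\Lambda_1, \Lambda_2 \colon C_c(X \times Y) \to \R$. They coincide on separated test functions $f(x,y) = g(x) h(y)$ with $g \in C_c(X)$ and $h \in C_c(Y)$, since both sides reduce to $\bigl(\int_X g\,d\mu\bigr)\bigl(\int_Y h\,d\lambda\bigr)$. To extend the equality to all of $C_c(X \times Y)$, I would represent $\Lambda_1$ and $\Lambda_2$ by Radon measures on $X \times Y$, a regular locally compact space as a product of such, and then invoke the uniqueness part of Riesz representation, Theorem \ref{th:riesz}, after verifying agreement of the two measures on a class generating $\mathcal{B}(X \times Y)$; a natural class is that of products of closed compact sets, where the verification uses Urysohn-type approximations from Theorem \ref{th:urysohn} to reduce to the separated case already handled. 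I expect the main obstacle to be precisely this last approximation step: passing from separated products to arbitrary $f \in C_c(X \times Y)$ in a way compatible with compact supports and with the non-$\sigma$-finiteness of Radon measures in the non-Hausdorff setting is the delicate content that can be inherited from \cite[Proposition 7.6.4]{bi:cohn}.
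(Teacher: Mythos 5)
Your parts (a) and (b) are correct and follow essentially the paper's route: the paper likewise passes to closed compact supersets $K_1'$, $K_2'$ of the projections of $\spt(f)$ via Lemma \ref{th:c_o_cc}, applies Lemma \ref{th:lemma_int} to $f|_{X\times K_2'}$, and uses local finiteness on closed compact sets to obtain the bound $\epsilon\,\lambda(K_2')$. Your argument for compact support of the slices in (a) (the slice of $\spt(f)$ is closed in $Y$ and sits inside the compact projection) is a harmless variant of the paper's, which instead notes that the slice vanishes outside the closed compact superset $K_2'$.

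Part (c), however, has a genuine gap. First, the route you sketch is not available with the tools at hand: Theorem \ref{th:riesz} is only the \emph{uniqueness} half of the Riesz representation theorem, so you cannot ``represent $\Lambda_1$ and $\Lambda_2$ by Radon measures on $X\times Y$'' without first proving an existence statement that the paper never establishes for non-Hausdorff spaces. Second, even granting representing measures, deducing their equality from agreement on products of closed compact sets (or on any generating class) is precisely where the lack of $\sigma$-finiteness bites: the standard Dynkin-system extension fails, and products of closed compact sets do not exhaust the closed compact subsets of $X\times Y$ on which a Radon measure is determined by regularity. Finally, you explicitly defer the decisive step --- passing from separated test functions to arbitrary $f$ --- to \cite[Proposition 7.6.4]{bi:cohn}; but that reference is not a density or monotone-class argument, it \emph{is} the direct proof the paper actually runs: using Lemma \ref{th:lemma_int} and compactness of $K_1'$ one partitions $K_1'$ into finitely many disjoint Borel sets $A_1,\dots,A_n$ with points $x_k\in A_k$ such that $|f(x,y)-f(x_k,y)|<\epsilon$ for all $x\in A_k$ and $y\in K_2'$, sets $g(x,y)=\sum_{k=1}^n \mathbbm{1}_{A_k}(x)f(x_k,y)$, computes both iterated integrals of $g$ explicitly as $\sum_{k=1}^n\mu(A_k)\int_Y f(x_k,y)\,d\lambda(y)$, and bounds the difference between each iterated integral of $f$ and the corresponding one of $g$ by $\epsilon\,\mu(K_1')\lambda(K_2')$, whence the two iterated integrals of $f$ differ by at most $2\epsilon\,\mu(K_1')\lambda(K_2')$. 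That elementary approximation, not a Riesz-representation argument, is what makes (c) work for Radon measures that need not be $\sigma$-finite; your proposal stops exactly where this content begins.
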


\begin{proof}
    Let $f\in C_c(X\times Y)$: Consider the sets $K \coloneqq \spt(f)$, $K_1 \coloneqq \pi_1(K)$ and $K_2 \coloneqq \pi_2(K)$, which are all compact. Let $K_1'$ and $K_2'$ be compact closed sets containing $K_1$ and $K_2$ respectively, as given by Lemma \ref{th:c_o_cc}.
    
    {\it (a)}
        Given $x\in X$, the function $y \mapsto f(x,y)$ is continuous and vanishes outside $K_2$, and then outside the closed set $K_2'$: therefore, its support lays in  $K_2'$ and thus it is compact. Given $y\in Y$, the proof for $x \mapsto f(x,y)$ is similar.

    {\it (b)}
        Let $x_0\in X$ and $\epsilon>0$.
		Applying Lemma \ref{th:lemma_int} to $f|_{X \times K_2'}$, one finds an open neighborhood $U_0$ of $x_0$ such that $| f(x,y) - f(x_0,y) | < \epsilon$ for every $x\in U_0$ and $y\in K_2'$. Therefore, for every $x\in U_0$ one has:
        \[\left|\int_Y f(x,y) \, d\lambda(y) - \int_Y f(x_0,y) \, d\lambda(y)\right| \leq \epsilon\lambda(K_2').\]
        Then, the function $x \mapsto \int_Y f(x,y) \, d\lambda(y)$ is continuous. Moreover, it vanishes outside $K_1$, and then outside the closed set $K_1'$: thus its support is compact since contained in the compact set $K_1'$. The proof for $y\mapsto \int_X f(x,y) \, d\mu(x)$ is similar.

    {\it (c)} 
        Let $\epsilon>0$.
        Applying the procedure shown in \cite[Proposition 7.6.4]{bi:cohn} to the closed compact set $K_1'$,	one finds $x_1,\dots,x_n\in K_1'$ and $A_1,\dots,A_n$ disjoint Borel sets such that $K_1' = \bigcup_{k=1}^n A_k$ and $| f(x,y) - f(x_k,y) | < \epsilon$ for every $x\in A_k$ and $y\in K_2'$, for every $k\in\{1,\dots,n\}$.
        Define:
        \[g \colon X \times Y \to \mathbbm{R}, \quad (x,y) \mapsto \sum_{k=1}^n \mathbbm{1}_{A_k}(x) f(x_k,y)
		\,\text{.} \]
        Both iterate integrals of $g$ are equal to $ \sum_{k=1}^n \mu(A_k) \int_Y f(x_k,y)d\lambda(y)$, because $[y \mapsto f(x_k,y)]\in C_c(Y)$ and $\mu(A_k)\leq\mu(K_1')<+\infty$ hold for every $k\in\{1,\dots,n\}$, since $K_1'$ is a closed compact set.
        The functions $f$ and $g$ vanish outside $K_1' \times K_2'$; on the contrary, for every $(x,y)\in K_1' \times K_2'$ there exists a unique index $i\in\{1,\dots,n\}$ such that $x\in A_i$, which implies that $|f(x,y)-g(x,y)|=|f(x,y)-f(x_i,y)|<\epsilon$. Then:
		\[\bigg|\int_Y \int_X f(x,y) \, d\mu(x) d\lambda(y) - \int_Y \int_X g(x,y) \, d\mu(x) d\lambda(y)\bigg| \leq \epsilon\mu(K_1')\lambda(K_2')
		\]
		\[\bigg|\int_X \int_Y f(x,y) \, d\lambda(y) d\mu(x) - \int_X \int_Y g(x,y) \, d\lambda(y) d\mu(x)\bigg| \leq \epsilon\mu(K_1')\lambda(K_2')
		\, \text{.} \]
		Since the iterate integrals of $g$ are equal, one gets:
		\[\bigg|\int_Y \int_X f(x,y) \, d\mu(x) d\lambda(y) - \int_X \int_Y f(x,y) \, d\lambda(y) d\mu(x)\bigg| \leq 2\epsilon\mu(K_1')\lambda(K_2')
		\, \text{.}\]
        Since $\epsilon$ is arbitrary, the claim follows.  \qedhere
\end{proof}

\medskip

\section{Haar measure on locally compact groups}    \label{sec:Haar}

In this section we prove the theorems of existence and uniqueness of the Haar measure on a locally compact group. The structure of the proofs is faithful to the classical theorems concerning locally compact Hausdorff groups; Cohn \cite[\S 9.2]{bi:cohn} is taken as reference. The results shown in Section \ref{sec:regular} replace the standard ones valid in the Hausdorff case.

We start by showing the existence of a left Haar measure on a locally compact topological group.

\begin{theorem}         \label{th:existence}
	Let $G$ be a locally compact topological group. Then there exists a left Haar measure on $G$.
\end{theorem}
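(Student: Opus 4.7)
The plan is to transport Cohn's covering-function construction of the Haar measure to the non-Hausdorff setting, working throughout with closed compact sets rather than arbitrary compact sets. The enabling fact is that a topological group is regular, hence by Proposition \ref{th:loc_comp_group} our locally compact $G$ is strongly locally compact with a local base of closed compact neighbourhoods at each point; together with Lemmas \ref{th:compact_closed_disjoint}, \ref{th:compact_in_opens} and \ref{th:c_o_cc} this replaces every use of the Hausdorff axiom in the classical argument.

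First I would fix, using strong local compactness at $e$, a closed compact set $K_0 \subseteq G$ with nonempty interior. For every closed compact $K$ and every nonempty open $V$, let $(K:V)$ denote the minimum number of left translates of $V$ needed to cover $K$, which is finite by compactness of $K$. Set
\[
h_V(K) \coloneqq \frac{(K:V)}{(K_0:V)} \in [0,(K:K_0)].
\]
Routine computations give left-translation-invariance $h_V(gK)=h_V(K)$, monotonicity, normalisation $h_V(K_0)=1$, finite subadditivity, and the crucial finite additivity $h_V(K_1 \cup K_2) = h_V(K_1) + h_V(K_2)$ whenever $V$ is small enough that no translate $gV$ meets both $K_1$ and $K_2$; for disjoint closed compact $K_1, K_2$ such a $V$ exists because Lemma \ref{th:compact_closed_disjoint} provides disjoint open neighbourhoods of the two sets, around which one can shrink $V$ via continuity of multiplication.

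Next I would view each $h_V$ as a point of the compact product space $\prod_{K}[0,(K:K_0)]$ indexed by the closed compact subsets of $G$ and, as $V$ ranges over the directed set of open neighbourhoods of $e$ ordered by reverse inclusion, extract a cluster point $h$ by Tychonoff. The resulting $h$ is a left-invariant, monotone, subadditive, and additive-on-disjoint-closed-compact-sets content on the family of closed compact subsets of $G$, normalised by $h(K_0)=1$ so that $h \ne 0$. The splitting of a closed compact set along a two-piece open cover that is required to verify subadditivity is precisely the content of Lemma \ref{th:compact_in_opens}.

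Finally I would extend $h$ to a Borel measure by the usual two-step procedure: set $\mu(U) \coloneqq \sup\{h(K) : K \subseteq U,\ K \text{ closed compact}\}$ for $U$ open and then $\mu(E) \coloneqq \inf\{\mu(U) : U \supseteq E,\ U\text{ open}\}$ for a general Borel set $E$. Left-translation-invariance of $\mu$ is inherited from $h$; local finiteness on closed compact sets follows from $\mu(K)\le (K:K_0)<\infty$; outer regularity on Borel sets and inner regularity on open sets are built into the definition; and $\mu(G)\ge h(K_0)=1$ guarantees that $\mu$ is nonzero. The main obstacle I expect is to check that each step of the classical argument that manipulates compact sets (finite unions, pieces of covers from Lemma \ref{th:compact_in_opens}, neighbourhoods of finite subcovers from Lemma \ref{th:c_o_cc}, separations of disjoint sets) produces sets that are still \emph{closed} compact, so that Lemma \ref{th:compact_closed_disjoint} keeps applying and the content $h$ remains defined on the correct class; maintaining this invariant is exactly what the preparatory section of the paper is designed to enable.
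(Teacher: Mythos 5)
Your proposal follows essentially the same route as the paper's proof, which itself defers to Cohn's Theorem 9.2.2 and differs from the classical argument only in systematically replacing compact sets by closed compact sets, using Lemmas \ref{th:compact_closed_disjoint}, \ref{th:compact_in_opens}, \ref{th:c_o_cc} and Proposition \ref{th:loc_comp_group} exactly as you describe. The one step you state too quickly is local finiteness: since $\mu(K)$ is the outer-regularised value $\inf\{\mu(U) \mid U \supseteq K \text{ open}\}$ rather than $h(K)$ itself, the bound $\mu(K)\le (K:K_0)$ is not immediate --- you need Lemma \ref{th:K<U} to place $K$ inside an open set $V$ with $\overline{V}$ compact (and closed), giving $\mu(K)\le\mu(V)\le(\overline{V}:K_0)<+\infty$, which is precisely where the paper invokes that lemma.
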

\begin{proof}
    The proof is structured as in \cite[Theorem 9.2.2]{bi:cohn}, and considers the collection of closed compact sets instead of the collection of compact sets.
\end{proof}

\medskip

In order to prove the uniqueness of the left Haar measure, two more results need to be generalised. Recall that in a topological group the product of a compact set and a closed set is closed (see, e.g., \cite[Theorem 4.4]{bi:hewitt}).

\begin{proposition}         \label{th:cont_trans}
    Let $G$ be a locally compact group and $\mu$ be a Radon measure on $G$. Then for every $f\in C_c(G)$ one has:
	\begin{enumerate}[ label=(\alph*)]
		\item $[x \mapsto f(xg)]\in C_c(G)$ and $[x \mapsto f(gx)]\in C_c(G)$ for every $g\in G$;
		\item $ \Bigl[ g \mapsto \displaystyle\int_G f(gx) d\mu(x) \Bigr] \in C(G)$ and $ \Bigl[ g \mapsto \displaystyle\int_G f(xg) d\mu(x) \Bigr] \in C(G)$.
	\end{enumerate}
\end{proposition}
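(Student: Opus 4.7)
The plan is as follows. For part (a), I exploit that for each $g\in G$ the translations $R_g\colon x\mapsto xg$ and $L_g\colon x\mapsto gx$ are homeomorphisms of $G$ (with continuous inverses $R_{g^{-1}}$ and $L_{g^{-1}}$). Hence $f\circ R_g$ and $f\circ L_g$ are continuous, and their supports are $R_{g^{-1}}(\spt(f))=\spt(f)g^{-1}$ and $L_{g^{-1}}(\spt(f))=g^{-1}\spt(f)$ respectively; these are images of the compact set $\spt(f)$ under homeomorphisms, so they are compact, showing membership in $C_c(G)$.

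For part (b) I focus on $g\mapsto\int_G f(gx)\,d\mu(x)$; the other case is analogous. Fix $g_0\in G$. The key idea is to trap the relevant $x$-values, as $g$ ranges over a neighborhood of $g_0$, inside a single closed compact set so that Lemma \ref{th:lemma_int} applies. By Proposition \ref{th:loc_comp_group}, the point $g_0$ admits a closed compact neighborhood $V$, and I set $L\coloneqq V^{-1}\spt(f)$. Now $V^{-1}$ is compact, as the image of $V$ under the homeomorphism $x\mapsto x^{-1}$, so $L$ is compact as the image of $V^{-1}\times\spt(f)$ under multiplication. Moreover, $L$ is closed, since in a topological group the product of a compact set and a closed set is closed (the fact recalled just before the proposition). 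If $g\in V$ and $f(gx)\ne 0$, then $gx\in\spt(f)$ and so $x=g^{-1}(gx)\in V^{-1}\spt(f)=L$; therefore $\int_G f(gx)\,d\mu(x)=\int_L f(gx)\,d\mu(x)$ for every $g\in V$.

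With this reduction, I apply Lemma \ref{th:lemma_int} to the continuous function $F(g,x)\coloneqq f(gx)$ viewed on $G\times L$, with $L$ as the compact factor: given $\epsilon>0$ there is an open neighborhood of $g_0$ in $G$, which after intersection with the interior of $V$ may be taken as $U_0\subseteq V$, such that $|f(gx)-f(g_0 x)|<\epsilon$ for all $g\in U_0$ and $x\in L$. Then
$$\left|\int_G f(gx)\,d\mu(x)-\int_G f(g_0 x)\,d\mu(x)\right|\leq \int_L |f(gx)-f(g_0 x)|\,d\mu(x)\leq \epsilon\,\mu(L),$$
and since $L$ is closed compact, $\mu(L)<+\infty$ by local finiteness of $\mu$, giving continuity at $g_0$.

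The main obstacle is ensuring that $L$ is \emph{closed} compact, not merely compact, so that local finiteness of the Radon measure $\mu$ applies. In the Hausdorff setting $V^{-1}\spt(f)$ would automatically be closed, but in the non-Hausdorff case I must combine two ingredients: strong local compactness of $G$ (Proposition \ref{th:loc_comp_group}) to pick $V$ closed compact, and the topological-group fact that the product of a compact set with a closed set is closed. Without this care, $L$ might fail to be closed compact, and the crucial finiteness $\mu(L)<+\infty$ would no longer be guaranteed by Definition \ref{def:Radon}.
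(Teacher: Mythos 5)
Your proof is correct and follows essentially the same route as the paper: both trap the relevant integration variable in a fixed set of the form (compact)$\cdot$(closed), invoke the fact that such a product is closed in a topological group so that local finiteness on closed compact sets applies, and conclude via Lemma \ref{th:lemma_int}. The only cosmetic difference is that you take a closed compact neighbourhood of $g_0$ directly from Proposition \ref{th:loc_comp_group}, whereas the paper uses the closure $\overline{U}$ of an open neighbourhood with compact closure (and treats $f(xg)$ rather than $f(gx)$).
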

\begin{proof}
    Claim (a) is straightforward. Claim (b) is proved as in \cite[Corollary 9.1.6]{bi:cohn}: we recall the main steps to prove that $g\mapsto \int_G f(xg)d\mu(x)$ is continuous in a point $g_0\in G$.
    Let $K\coloneqq \spt(f)$ and let $U$ be an open neighborhood of $g_0$ with compact closure. The set $K\overline{U}^{-1}$ is compact because $K\times\overline{U}^{-1}$ is so, and it is closed because $K$ is compact and $\overline{U}^{-1}$ is closed. Then $\mu(K\overline{U}^{-1})<+\infty$, since $\mu$ is finite on closed compact sets. The proof proceeds as in \cite[Corollary 9.1.6]{bi:cohn}: fixed $\epsilon>0$, by applying Lemma \ref{th:lemma_int} one finds an open neighborhood $V$ of $g_0$ such that for every $g\in V$ the function $x \mapsto f(xg) - f(xg_0)$ has absolute value controlled by $\epsilon$ and vanishes outside $K\overline{U}^{-1}$; therefore, for every $g\in V$ one has:
    \[\left|\int_G f(xg) \, d\mu(x)-\int_G f(xg_0) \, d\mu(x)\right| \leq \epsilon\mu(K\overline{U}^{-1}).\]
    Thus the function $g\mapsto \int_G f(xg)d\mu(x)$ is continuous.
\end{proof}

\medskip

\begin{proposition}             \label{th:Haar_positive}
	Let $G$ be a locally compact topological group, and let $\mu$ be a (left or right) Haar measure on $G$. Then:
	\begin{enumerate}[label=(\alph*)] 
		\item there exists a closed compact set $K$ such that $\mu(K)>0$;
		\item for every non empty open set $U$ one has $\mu(U)>0$;
		\item for every $f\in C_c(G)$ non negative and nonzero one has $\int_G f d\mu>0$.
	\end{enumerate}
\end{proposition}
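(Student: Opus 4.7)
The plan is to prove the three claims in order, with each building on the previous.

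For (a), since $\mu$ is nonzero, there is some Borel set of positive measure, so by monotonicity $\mu(G)>0$. The whole group $G$ is open, so inner regularity on open sets (Definition \ref{def:Radon}) gives $\mu(G)=\sup\{\mu(K)\mid K\subseteq G \text{ closed compact}\}$, and this supremum being positive forces the existence of a closed compact $K$ with $\mu(K)>0$.

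For (b), I will argue by contradiction: suppose some non-empty open $U\subseteq G$ satisfies $\mu(U)=0$. Without loss of generality assume $\mu$ is left-invariant (the right case is symmetric). Since left translation $x\mapsto gx$ is a homeomorphism of $G$, each $gU$ is open; and for any $x\in G$, fixing $u\in U$ one has $x=(xu^{-1})u\in(xu^{-1})U$, so $\{gU\mid g\in G\}$ is an open cover of $G$. Invoking part (a), fix a closed compact $K$ with $\mu(K)>0$. Compactness of $K$ yields a finite subcover $g_1U,\dots,g_nU$, whence
\[\mu(K)\leq\sum_{i=1}^{n}\mu(g_iU)=n\mu(U)=0,\]
a contradiction.

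For (c), let $f\in C_c(G)$ be non-negative and nonzero, and pick $x_0\in G$ with $f(x_0)>0$. Set $c\coloneqq f(x_0)/2$; by continuity the set $V\coloneqq\{x\in G\mid f(x)>c\}$ is open and non-empty. Moreover $V\subseteq\spt(f)$, which is closed and compact, so local finiteness gives $\mu(V)\leq\mu(\spt(f))<+\infty$, while (b) forces $\mu(V)>0$. Then $\int_G f\,d\mu\geq\int_V f\,d\mu\geq c\,\mu(V)>0$, as desired.

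No step presents a real obstacle; the only delicate point is (b), where one must remember that in a topological group every translation is a homeomorphism (so that $gU$ is open and translation-invariance of $\mu$ can be applied to it) and that covering a compact set by translates of a fixed open set is genuinely possible because the translates already cover all of $G$.
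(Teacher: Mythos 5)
Your proof is correct and follows essentially the same route as the paper, which simply defers to Cohn's Theorem 9.2.5: part (a) from nonzero-ness plus inner regularity of the open set $G$, part (b) by covering the closed compact set of positive measure with finitely many translates of $U$, and part (c) by bounding $f$ below on the open set $\{f>c\}$. Your write-up just makes explicit the "straightforward consequences" the paper leaves to the reference.
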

\begin{proof}
	The proof follows exactly as in \cite[Theorem 9.2.5]{bi:cohn}: claim (a) holds since the Radon measure $\mu$ is nonzero, and claims (b) and (c) are straightforward consequences.
\end{proof}

\medskip

We are now ready to prove the uniqueness of the Haar measure on locally compact groups.

\begin{theorem}         \label{th:uniqueness}
	Let $G$ be a locally compact topological group, and let $\mu$ and $\mu'$ be left Haar measures on $G$. Then there exists a constant $a>0$ such that $\mu'=a\mu$.
\end{theorem}
\begin{proof}
    The proof follows as in \cite[Theorem 9.2.6]{bi:cohn}.

    Let $K$ be a non-empty closed compact set such that $\mu(K)>0$, that exists by Proposition \ref{th:Haar_positive}(a). By applying Theorem \ref{th:urysohn} to $K$ and $G$, one gets a function $g\in C_c(G)$ which is nonzero and non negative. Fix $f\in C_c(G)$. Let $L\coloneqq \spt(f)$ and $F\coloneqq \spt(g)$.
    Consider the function:
    \[h\colon G\times G \to \R, \quad (x,y)\mapsto \frac{f(x) \, g(yx)}{\int_G g(tx) \, d\mu'(t)}
	\]
    which is well defined by Propositions \ref{th:cont_trans}(a) and \ref{th:Haar_positive}(c), and continuous by Proposition \ref{th:cont_trans}(b). It is also compactly supported, since $\spt(h)\subseteq L\times FL^{-1}$ which is closed and compact.

    The Haar measures $\mu$ and $\mu'$ induce left translation-invariant integrals. By applying Proposition \ref{th:product_measure}(c) to $h\in C_c(G\times G)$ (where the first and the second factors of $G \times G$ are endowed with $\mu$ and $\mu'$ respectively) and the changes of variables $x \mapsto y^{-1}x$ and $y \mapsto xy$, one proceeds as in \cite[Theorem 9.2.6]{bi:cohn} to get:

    \begin{align*}
	\int_G f(x) \, d\mu(x)
        & = \int_G \biggl[ f(x) \; \frac{\int_G g(yx) \, d\mu'(y)}{\int_G g(tx) \, d\mu'(t)} \biggr] d\mu(x) \\
        & =	\int_G \biggl[ \int_G h(x,y) \, d\mu'(y) \biggr] d\mu(x) \\
        & =	\int_G \biggl[ \int_G h(y^{-1}x,y) \, d\mu(x) \biggr] d\mu'(y) \\
        & =	\int_G \biggl[ \int_G h(y^{-1},xy) \, d\mu'(y) \biggr] d\mu(x) \\
        & =	\int_G \biggl[ \int_G \frac{f(y^{-1}) \, g(x)}{\int_G g(ty^{-1}) \, d\mu'(t)} \, d\mu'(y) \biggr] d\mu(x) \\
        & = \biggl( \int_G g(x) \, d\mu(x) \biggr) \biggl( \int_G \frac{f(y^{-1})}{\int_G g(ty^{-1})\, d\mu'(t)} \, d\mu'(y) \biggr) \,.
    \end{align*}

    Analogously, one gets:
	\[ \int_G f(x) \, d\mu'(x) = \biggl( \int_G g(x) d\mu'(x) \biggr) \biggl( \int_G \frac{f(y^{-1})}{\int_G g(ty^{-1}) \, d\mu'(t)} \, d\mu'(y) \biggr)
	\, \text{,}\]
	All the integrals involved are nonzero by Proposition \ref{th:Haar_positive}. Therefore:
    \[\frac{\int_G f(x) \, d\mu(x)}{\int_G g(x) \, d\mu(x)} = \frac{\int_G f(x) \, d\mu'(x)}{\int_G g(x) \, d\mu'(x)}
	\,\text{.}\]
	By defining $a \coloneqq \frac{\int_G g(x) \, d\mu'(x)}{\int_G g(x) \, d\mu(x)} > 0$, the equality $\int_G f \, d\mu' = a\int_G f \, d\mu$ holds for every $f\in C_c(G)$. By Theorem \ref{th:riesz}, one concludes that $\mu'= a \mu$.
\end{proof}

\medskip

Lastly, as in the classical proof, the following proposition ensures existence and uniqueness of the right Haar measure too, since right Haar measures and left Haar measures are in a one-to-one correspondence.

\begin{proposition}             \label{th:dx_sx}
	Let $G$ be a topological group, and let $\mu$ be a positive Borel measure on $G$. For every $E\in\mathcal{B}(G)$ define $\mu'(E)\coloneqq \mu(E^{-1})$. Then $\mu$ is a left (resp. right) Haar measure on $G$ if and only if $\mu'$ is a right (resp. left) Haar measure on $G$. 
\end{proposition}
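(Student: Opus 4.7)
The plan is to exploit the fact that the inversion map $\iota \colon G \to G$, $g \mapsto g^{-1}$, is a homeomorphism of the topological group $G$. This immediately gives that $\iota$ preserves, and is preserved by, the four relevant classes of sets: Borel sets, open sets, closed sets, and compact sets (and hence closed compact sets). In particular $A^{-1} \in \mathcal{B}(G)$ whenever $A \in \mathcal{B}(G)$, so $\mu'$ is a well-defined set function on $\mathcal{B}(G)$. Countable additivity of $\mu'$ follows at once from that of $\mu$ together with the identity $\bigl(\bigcup_n A_n\bigr)^{-1} = \bigcup_n A_n^{-1}$ and the fact that disjointness is preserved under inversion. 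Moreover, since $(A^{-1})^{-1} = A$, we have $(\mu')' = \mu$, so the two directions of the claimed equivalence are symmetric and it suffices to prove one of them.

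Next I would verify, one by one, the three clauses of Definition \ref{def:Radon} for $\mu'$, assuming them for $\mu$. Local finiteness on closed compact sets: if $K$ is closed and compact, then so is $K^{-1}$, hence $\mu'(K) = \mu(K^{-1}) < +\infty$. Outer regularity on Borel sets: for $E \in \mathcal{B}(G)$, the substitution $V = U^{-1}$ yields a bijection between open sets $V \supseteq E$ and open sets $U \supseteq E^{-1}$, and under this bijection $\mu(U) = \mu'(V)$, so
\[
\mu'(E) = \mu(E^{-1}) = \inf\{\mu(U) \mid U \supseteq E^{-1} \text{ open}\} = \inf\{\mu'(V) \mid V \supseteq E \text{ open}\}.
\]
Inner regularity on open sets is handled identically, with closed compact $K \subseteq E^{-1}$ replaced by closed compact $K^{-1} \subseteq E$. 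Finally, $\mu$ is nonzero iff $\mu'$ is nonzero, since $\mu'(A^{-1}) = \mu(A)$.

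It remains to translate left-invariance of $\mu$ into right-invariance of $\mu'$ (and conversely). For $g \in G$ and $A \in \mathcal{B}(G)$ we compute
\[
\mu'(Ag) = \mu\bigl((Ag)^{-1}\bigr) = \mu(g^{-1}A^{-1}) = \mu(A^{-1}) = \mu'(A),
\]
using left-invariance of $\mu$ applied with the element $g^{-1}$. The reverse direction is the same calculation read backwards: if $\mu'$ is right-invariant, then $\mu(gA) = \mu'((gA)^{-1}) = \mu'(A^{-1}g^{-1}) = \mu'(A^{-1}) = \mu(A)$. Combining this with the Radon properties verified above gives that $\mu'$ is a right Haar measure whenever $\mu$ is a left Haar measure, and by the symmetry $(\mu')' = \mu$ the full equivalence follows, including the right-versus-left variant of the statement.

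There is no real obstacle here; the entire argument is a routine transport of structure along the homeomorphism $\iota$. The only point deserving a moment of care is to confirm explicitly that inversion in a topological group takes closed compact sets to closed compact sets, so that the notion of inner regularity and local finiteness in Definition \ref{def:regular} is preserved verbatim under the change of variables $A \leftrightarrow A^{-1}$.
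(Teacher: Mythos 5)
Your proposal is correct and follows exactly the same route as the paper's proof, which likewise rests on the observation that inversion is a homeomorphism and hence that $K$ is closed and compact if and only if $K^{-1}$ is; you have simply written out in full the details that the paper delegates to Cohn's Proposition 9.3.1. No gaps.
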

\begin{proof}
    The proof follows as in \cite[Proposition 9.3.1]{bi:cohn}: 	if $\mu$ is a left Haar measure, $\mu'$ is a positive Borel measure which is nonzero and right-translation-invariant; it is also a Radon measure since the inversion map is an homeomorphism, and therefore $K\subseteq G$ is closed and compact if and only if $K^{-1}\subseteq G$ is closed and compact.
\end{proof}

\medskip
 
\begin{example}             \label{ex:product}
    Let $(X,\tau_X)$ be a Hausdorff locally compact group with (left or right) Haar measure $\mu_X$, and let $G$ be a non-trivial indiscrete group. The product of groups $X \times G$ with the product topology $\tau_{X\times G}$ is locally compact and non-Hausdorff. We have $\tau_{X\times G} = \{U\times G \mid U\in\tau_X\}$ and $\mathcal{B}(X\times G) = \{E\times G \mid E\in \mathcal{B}(X) \}$. The (left or right) Haar measure on $X \times G$ is given by:
    \[
        \mu_{X\times G} \colon \mathcal{B}(X \times G) \to [0,+\infty], \quad E \mapsto \mu(\pi_X(E))	
    \]
    where $\pi_X \colon X\times G \to X$ is the natural projection.
\end{example}

\begin{remark}
    In the case of a locally compact Hausdorff group, the generalised statements, that are Theorems \ref{th:existence} and \ref{th:uniqueness}, exactly coincide with their usual version.
\end{remark}

\section{The construction via quotients}    \label{sec:quot}

As said in the introduction, some sources in the literature suggest a way to generalise the Haar measure to non-Hausdorff groups through quotients. Using the definitions given in Section \ref{sec:new_def}, now it is easy to prove the consistency of that construction. See, e.g., \cite{bi:dikr} for the properties of topological groups that will be used.

Let $(G,\tau_G)$ be a topological group. Since the set $\overline{\{e_G\}}$ is a normal subgroup of $G$ \cite[Lemma 3.1.1(b)]{bi:dikr}, consider the group $G_H \coloneqq G / \overline{\{e_G\}}$ endowed with the quotient topology $\tau_{G_H}$, i.e. the final topology induced by the quotient map $\pi \colon G \to G_H$. We recall some basic facts.

\begin{enumerate}[label*=(\roman*)]
    \item \label{ls:G_H_T2}
    The topological group $G_H$ is Hausdorff \cite[Lemma 3.2.10(b)]{bi:dikr}. Moreover, if $G$ is locally compact, then $G_H$ is locally compact too \cite[Lemma 8.2.5(a)]{bi:dikr}.
    \item \label{ls:G_topology} The topology $\tau_G$ coincides with the initial topology on $G$ with respect to the quotient map $\pi$ \cite[Lemma 3.4.1(b)]{bi:dikr}, that is,
    \begin{equation*}
        \tau_G = \{ \pi^{-1}(V) \mid V\in \tau_{G_H} \}.
    \end{equation*}
    In particular, this implies
    \begin{equation*}
        \mathcal{B}(G) = \{ \pi^{-1}(F) \mid F\in \mathcal{B}(G_H) \} ,
    \end{equation*}
    and also that all closed compact subsets of $G$ are of the form $\pi^{-1}(C)$ for some compact set $C\subseteq G_H$.
    \item \label{ls:pi_open}
    The map $\pi$ is open \cite[Lemma 3.2.1(a)]{bi:dikr}, and therefore
    \begin{equation*}
        \tau_{G_H}=\{\pi(U) \mid U\in\tau_G\}.
    \end{equation*}
    In particular, this implies
    \begin{equation*}
        \mathcal{B}(G_H)=\{\pi(E) \mid E\in\mathcal{B}(G)\}.
    \end{equation*}
    Indeed, the collection $\{\pi(E) \mid E\in\mathcal{B}(G)\}$ is the $\sigma$-algebra generated by $\{\pi(U) \mid U\in\tau_G\}$, since the operators of union and complementary commute with $\pi$ because $\pi$ is surjective.
    \item \label{ls:pi_compacts}
    The map $\pi$ is closed \cite[Lemma 8.2.2]{bi:dikr}.
    Therefore, if $G$ is locally compact, every compact set of $G_H$ is of the form $\pi(K)$ for some closed compact set $K\subseteq G$: it follows by \cite[Lemma 8.2.5(b)]{bi:dikr}, where the provided compact set $K$ is also closed.
    \item \label{ls:borel_equality}
    If $E\in \mathcal{B}(G)$, then $\pi^{-1}(\pi(E))=E$ by \ref{ls:G_topology}.
    Indeed, given $F\in \mathcal{B}(G_H)$ such that $E=\pi^{-1}(F)$, it holds $\pi^{-1}(\pi(E))=\pi^{-1}(\pi(\pi^{-1}(F)))= \pi^{-1}(F) =E$. In particular, given $E_1,E_2\in \mathcal{B}(G)$:
    \begin{enumerate}[label*=\arabic*.]
        \item[(v')] \label{ls:disjoint} if $E_1 \cap E_2 = \emptyset$, then  $\pi(E_1)\cap\pi(E_2)= \emptyset$;
        \item[(v'')] \label{ls:contained} if $\pi(E_1)\subseteq \pi(E_2)$, then $E_1\subseteq E_2$.
    \end{enumerate}
\end{enumerate}

\begin{remark}
    If $G$ is a locally compact group, $G_H$ is a locally compact Hausdorff group by statements \ref{ls:G_H_T2}, and then it has a Haar measure which is unique up to a multiplicative constant, by Theorems \ref{th:existence_H} and \ref{th:uniqueness_H}. 
\end{remark}

Now, using the definition of Haar measure that follows by Definition \ref{def:Radon}, one can check that the pullback measure built on a locally compact group through $\pi$ is a Haar measure: this is a constructive proof of its existence.

\begin{proposition}             \label{th:quotient}
    Let $G$ be a locally compact topological group; consider the topological group $G_H \coloneqq G / \overline{\{e_G\}}$ and let $\pi \colon G \to G_H$ be the quotient map. Let $\mu$ be a left (resp. right) Haar measure on $G_H$. For every $E\in \mathcal{B}(G)$ define $\overline{\mu}(E)\coloneqq \mu(\pi(E))$. Then $\overline{\mu}$ is a left (resp. right) Haar measure on $G_H$.
\end{proposition}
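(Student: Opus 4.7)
The plan is to verify each of the six defining properties of a left Haar measure for $\overline{\mu}$, leaning on the bookkeeping properties (i)--(xi) collected before the statement. Well-definedness is immediate: by \ref{ls:G'_borel}, $\pi(A)\in\mathcal{B}(G')$ whenever $A\in\mathcal{B}(G)$, so $\overline{\mu}$ makes sense on $\mathcal{B}(G)$. For countable additivity, if $(A_n)$ is a sequence of pairwise disjoint Borel subsets of $G$, then by \ref{ls:borel_disjoint} the images $(\pi(A_n))$ are pairwise disjoint in $G'$; since $\pi$ commutes with arbitrary unions (as any function does), countable additivity of $\mu$ transfers to $\overline{\mu}$. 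Non-triviality is equally direct: because $\mu\ne 0$, there is $A\in\mathcal{B}(G')$ with $\mu(A)>0$, and by \ref{ls:G'_borel} we can write $A=\pi(E)$ for some $E\in\mathcal{B}(G)$, giving $\overline{\mu}(E)>0$. Left-translation-invariance reduces to the identity $\pi(gA)=\pi(g)\pi(A)$ and translation-invariance of $\mu$ on $G'$.

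For local finiteness on closed compact sets, suppose $K\subseteq G$ is closed and compact. Its image $\pi(K)$ is compact as the continuous image of a compact set, and closed in the Hausdorff space $G'$ by \ref{ls:G'_T2}; so $\overline{\mu}(K)=\mu(\pi(K))<+\infty$. For outer regularity on a Borel set $A$, the inequality $\overline{\mu}(A)\le\inf\{\overline{\mu}(U):U\supseteq A,\ U\text{ open}\}$ follows from monotonicity together with $\pi$ being open (\ref{ls:G'_topology}): any open $U\supseteq A$ gives an open $\pi(U)\supseteq\pi(A)$. For the reverse inequality, apply outer regularity of $\mu$ on $\pi(A)$ and, for each open $V\supseteq\pi(A)$ in $G'$, take $U:=\pi^{-1}(V)$, which is open in $G$, contains $A$, and satisfies $\pi(U)=V$ by surjectivity, so $\overline{\mu}(U)=\mu(V)$.

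The one step that genuinely uses the subtler quotient properties is inner regularity on open sets. Given an open $U\subseteq G$, the set $\pi(U)$ is open in $G'$ by \ref{ls:G'_topology}, and inner regularity of $\mu$ yields $\mu(\pi(U))=\sup\{\mu(C):C\subseteq\pi(U),\ C\text{ closed compact in }G'\}$. For each such $C$, the lifting result \ref{ls:pi_compacts} produces a closed compact $K\subseteq G$ with $\pi(K)=C$; the relation $\pi(K)=C\subseteq\pi(U)$, combined with \ref{ls:borel_inclusion}, forces $K\subseteq U$. Hence $\overline{\mu}(K)=\mu(C)$, and taking the supremum over $C$ shows $\overline{\mu}(U)\le\sup\{\overline{\mu}(K):K\subseteq U,\ K\text{ closed compact}\}$. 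The reverse inequality is immediate from monotonicity (any closed compact $K\subseteq U$ has $\pi(K)\subseteq\pi(U)$ compact, hence closed in $G'$ by \ref{ls:G'_T2}).

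The main obstacle is the inner-regularity step, because it requires lifting a compact subset of $\pi(U)$ to a closed compact subset of $U$ itself; this is exactly where the non-Hausdorff setting could fail and where the specific properties \ref{ls:pi_compacts} and \ref{ls:borel_inclusion} are indispensable. Once these are invoked, every other clause reduces to a one-line transport across $\pi$.
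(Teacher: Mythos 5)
Your proof is correct and follows essentially the same route as the paper's: each defining property is transported across $\pi$ using the enumerated quotient facts, with the inner-regularity step on open sets resting on \ref{ls:pi_compacts} and \ref{ls:borel_inclusion} exactly as in the paper. The only cosmetic difference is that for outer regularity you pull open sets of $G'$ back via $\pi^{-1}$ rather than writing them as $\pi(V)$ and invoking \ref{ls:borel_inclusion}; both work.
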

\begin{proof}
    The function $\overline{\mu}$ is well defined by statement \ref{ls:pi_open}, and it is clearly positive and nonzero. By statement (v'), $\sigma$-addictivity follows. Translation-invariance is given by the fact that $\pi$ is a group homomorphism and $\mu$ is translation-invariant. Local finiteness works because $\pi$ is continuous.
    
    Given $E\in \mathcal{B}(G)$, outer regularity is satisfied:
    \[
    \begin{split}
        \overline{\mu}(E) & = \mu(\pi(E)) = \inf\{\mu(V) \mid V \supseteq \pi(E), \, V\in\tau_{G_H}\} \\
        & = \inf\{\mu(\pi(U)) \mid \pi(U) \supseteq \pi(E), \, U\in\tau_G\} \\
        & = \inf\{\mu(\pi(U)) \mid U \supseteq E, \, U\in\tau_G\} \\
        & = \inf\{ \overline{\mu}(U) \mid U \supseteq E, \, U\in\tau_G\}
    \end{split}
    \]
    where the third equality follows by statement \ref{ls:pi_open}, and the forth one by statement (v''). 

    Given $U\in\tau_G$, inner regularity is satisfied:
    \[
    \begin{split}
        \overline{\mu}(U) & = \mu(\pi(U)) = \sup\{\mu(C) \mid C \subseteq \pi(U) \text{, $C$ compact in } G_H\} \\
        & = \sup\{\mu(\pi(K)) \mid \pi(K) \subseteq \pi(U) \text{, $K$ closed and compact in } G \} \\
        & = \sup\{ \mu(\pi(K)) \mid K \subseteq U \text{, $K$ closed and compact in } G \} \\
        & = \sup\{ \overline{\mu}(K) \mid K \subseteq U \text{, $K$ closed and compact in } G \}
    \end{split}
    \]
    where the third equality follows by statement \ref{ls:pi_compacts}, and the forth one follows by statement (v'').
\end{proof}

\medskip

\begin{example}             \label{ex:product.1}
    Consider the topological group $X\times G$ as in Example \ref{ex:product}. Since $X\times G / \overline{\{(e_x,e_G)\}} \simeq X$ as topological groups, the Haar measure described in Example \ref{ex:product} coincides with the measure defined via quotient map in Theorem \ref{th:quotient}.
\end{example}

We conclude the paper giving an alternative proof of Theorem \ref{th:uniqueness}, which is based on the construction via quotients.

Given a locally compact topological group $G$, consider the topological group $G_H = G / \overline{\{e_G\}}$, endowed with the quotient topology as before, and $\pi \colon G \to G_H$ the quotient map. Denote with $\mathfrak{M}_H(G)$ and $\mathfrak{M}_H(G_H)$ the sets of left Haar measures on $G$ and $G_H$ respectively.
We consider the following functions: 
\[
\pi_* \colon \mathfrak{M}_H(G) \to \mathfrak{M}_H(G_H), \quad \mu \mapsto \pi_*\mu
\]
\[
\pi^* \colon \mathfrak{M}_H(G_H) \to \mathfrak{M}_H(G), \quad \nu \mapsto \pi^*\nu
\]
defined by:
\[
\pi_*\mu(F) = \mu(\pi^{-1}(F)) \quad \text{for every } F\in\mathcal{B}(G_H)
\]
\[
\pi^*\nu(E) = \nu(\pi(E)) \quad \text{for every } E\in\mathcal{B}(G)
\]

\begin{remark}
    For every $\nu \in \mathfrak{M}_H(G_H)$, the image $\pi^*\nu$ is the \emph{pullback} measure $\overline{\nu}$ introduced in Theorem \ref{th:quotient}.

    For every $\mu \in \mathfrak{M}_H(G)$, the image $\pi_*\mu$ is the well known \emph{pushforward} measure induced by the continuous map $\pi$: by an argument similar to the one given in Theorem \ref{th:quotient}, it is immediate to see that $\pi_*\mu$, which is a measure in general, is actually a left Haar measure.
\end{remark}

\begin{proposition}             \label{th:quotient_uniq}
    Let $G$ be a locally compact topological group; consider the topological group $G_H \coloneqq G / \overline{\{e_G\}}$ and let $\pi \colon G \to G_H$ be the quotient map. Then the functions $\pi_*$ and $\pi^*$ define a one-to-one correspondence between $\mathfrak{M}_H(G)$ and $\mathfrak{M}_H(G_H)$. 
\end{proposition}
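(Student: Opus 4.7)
The plan is to verify that the two assignments $\pi_\ast$ and $\pi^\ast$ are mutually inverse. Well-definedness is already in hand: $\pi^\ast \nu \in \mathfrak{M}_H(G)$ by Proposition \ref{th:quotient}, and $\pi_\ast \mu \in \mathfrak{M}_H(G')$ is asserted in the preceding remark (it follows by the same kind of argument as Proposition \ref{th:quotient}, using that $\pi$ is a continuous, open, closed, surjective group homomorphism, that it sends closed compact sets to compact sets and, by \ref{ls:pi_compacts}, lifts compact sets to closed compact preimages). So it remains to check the two identities $\pi_\ast \circ \pi^\ast = \mathrm{id}_{\mathfrak{M}_H(G')}$ and $\pi^\ast \circ \pi_\ast = \mathrm{id}_{\mathfrak{M}_H(G)}$.

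First I would dispatch $\pi_\ast \circ \pi^\ast = \mathrm{id}$. For $\nu \in \mathfrak{M}_H(G')$ and $F \in \mathcal{B}(G')$, by the definitions
\[
(\pi_\ast \pi^\ast \nu)(F) = (\pi^\ast\nu)(\pi^{-1}(F)) = \nu\bigl(\pi(\pi^{-1}(F))\bigr) = \nu(F),
\]
where the last equality uses only that $\pi$ is surjective. This is the easy direction.

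The substantive direction is $\pi^\ast \circ \pi_\ast = \mathrm{id}$. For $\mu \in \mathfrak{M}_H(G)$ and $E \in \mathcal{B}(G)$, one computes
\[
(\pi^\ast \pi_\ast \mu)(E) = (\pi_\ast\mu)(\pi(E)) = \mu\bigl(\pi^{-1}(\pi(E))\bigr),
\]
so the claim reduces to $\pi^{-1}(\pi(E)) = E$ for every Borel set $E \subseteq G$. The inclusion $E \subseteq \pi^{-1}(\pi(E))$ is trivial. The key observation for the reverse inclusion is that every Borel subset of $G$ is saturated for the equivalence relation $x \sim y \iff \pi(x) = \pi(y)$, i.e. $x \sim y \iff x^{-1}y \in \overline{\{e_G\}}$: this is precisely statement \ref{ls:borel_regularity}, since for $x \in E$ we have $\overline{\{x\}} = x\overline{\{e_G\}} \subseteq E$, and any $y$ with $\pi(y) = \pi(x)$ lies in $x\overline{\{e_G\}}$. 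Hence $\pi^{-1}(\pi(E)) \subseteq E$.

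The main (and only) subtle point is this saturation property, but it is already packaged in the list of preliminaries \ref{ls:open_regularity}--\ref{ls:borel_equality}. Once it is invoked the proof collapses to a two-line verification in each direction. I would therefore present the argument as: (i) recall well-definedness by citing Proposition \ref{th:quotient} and the preceding remark; (ii) prove $\pi_\ast \pi^\ast = \mathrm{id}$ from surjectivity of $\pi$; (iii) prove $\pi^\ast \pi_\ast = \mathrm{id}$ by reducing to $\pi^{-1}(\pi(E)) = E$ and invoking \ref{ls:borel_regularity}.
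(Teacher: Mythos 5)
Your proposal is correct and follows essentially the same route as the paper: both directions reduce to the identities $\pi(\pi^{-1}(F))=F$ (surjectivity) and $\pi^{-1}(\pi(E))=E$ for Borel $E$, the latter being the saturation property of Borel sets; the paper cites statement \ref{ls:borel_equality} where you unwind it one step further to \ref{ls:borel_regularity}, but these rest on the same fact. No substantive difference.
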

\begin{proof}
    For every measure $\mu \in \mathfrak{M}_H(G)$ and for every set $E\in\mathcal{B}(G)$, one directly computes that $\pi^*\pi_*\mu(E) = \pi_*\mu(\pi(E)) = \mu(\pi^{-1}(\pi(E))) = \mu(E)$, since $\pi^{-1}(\pi(E)) = E$ by statement \ref{ls:borel_equality}. For every $\nu \in \mathfrak{M}_H(G_H)$ and for every $F\in\mathcal{B}(G_H)$ one has $\pi_*\pi^*\nu(F) = \pi^*\nu(\pi^{-1}(F)) = \nu(\pi(\pi^{-1}(F))) = \nu(F)$. Therefore, the functions $\pi_*$ and $\pi^*$ are inverses of each other.
\end{proof}

\medskip

Theorem \ref{th:uniqueness} can be proved as a corollary of Proposition \ref{th:quotient_uniq}.

\begin{proof}[Alternative proof of Theorem \ref{th:uniqueness}]
    Given $\mu, \mu' \in \mathfrak{M}_H(G)$, by Theorem \ref{th:uniqueness_H} there exists a constant $a>0$ such that $\pi_*\mu'=a\pi_*\mu$. Then, one has that $\mu' (E)= \pi^*\pi_*\mu' (E) = \pi^*a\pi_*\mu(E) = a\pi_*\mu(\pi(E)) = a \mu (E)$ for every $E\in \mathcal{B}(G)$, which implies $\mu' = a \mu$.
\end{proof}


\vspace{0.5cm}

\noindent \rule{\textwidth}{0.4pt}

\paragraph{Acknowledgements}
I am grateful to Professor Arvid Perego (Università di Genova) for his encouragement and advice, and I sincerely thank the referee for their precious comments.

\bibliographystyle{plainurl}
\bibliography{bibliography.bib}

\vspace{0.5cm}

\noindent \rule{\textwidth}{0.4pt}

\vspace{0.5cm}

\end{document}